\newtheorem{theorem}{Theorem}[section]
\newtheorem{corollary}[theorem]{Corollary}
\newtheorem{lemma}[theorem]{Lemma}
\newtheorem{proposition}[theorem]{Proposition}
\newtheorem{definition}[theorem]{Definition}
\def\R{{\mathbb R}}
\def\N{{\mathbb N}}
\def\Z{{\mathbb Z}}
\def\d{{\rm d}}
\def\:{\colon}
\def\<{{\langle}}
\def\>{{\rangle}}
\def\A{{\mathscr A}}
\def\AA{{\mathbb A}}
\def\B{{\mathscr B}}
\def\BB{{\mathbb B}}
\numberwithin{equation}{section}
\begin{document}

\begin{frontmatter}

\author[LH]{Luan T. Hoang}
\ead{luan.hoang@ttu.edu}
\address[LH]{Department of Mathematics and Statistics,
Texas Tech University\\
Box 41042,
Lubbock, TX 79409-1042, USA.}
\author[Reno]{Eric J. Olson}
\ead{ejolson@unr.edu}
\address[Reno]{Department of
  Mathematics/084, University of Nevada\\ Reno, NV 89557, USA.}
\author[warwick]{James C. Robinson}
\ead{J.C.Robinson@warwick.ac.uk}
\address[warwick]{Mathematics Institute, Zeeman Building,\\
	University of Warwick, Coventry CV4 7AL, UK.}

\date{\today}

\title{Continuity of pullback and uniform attractors}

\begin{abstract}
We study the continuity of pullback and uniform attractors for
non-autonomous dynamical systems with respect to perturbations
of a parameter.
Consider a family of dynamical systems parameterized
by $\lambda\in\Lambda$, where $\Lambda$ is a complete metric space, such that for
each $\lambda\in\Lambda$ there exists a
unique pullback attractor $\A_\lambda(t)$.
Using the theory of Baire category we show
under natural conditions
that there exists a
residual set $\Lambda_*\subseteq\Lambda$ such that
for every $t\in\R$
the function $\lambda\mapsto\A_\lambda(t)$ is continuous
at each $\lambda\in\Lambda_*$
with respect to the Hausdorff metric.
Similarly, given a family of uniform attractors $\AA_\lambda$,
there is a residual set at which
the map $\lambda\mapsto\AA_\lambda$ is continuous.
We also introduce notions of equi-attraction suitable for
pullback and uniform attractors and then
show when $\Lambda$ is compact that
the continuity of pullback attractors and
uniform attractors with respect to $\lambda$ is equivalent to
pullback equi-attraction and, respectively,
uniform equi-attraction.
These abstract results are then illustrated in the context of the
Lorenz equations and the
two-dimensional Navier--Stokes equations.
\end{abstract}

\begin{keyword}
Pullback attractor, uniform attractor.
\end{keyword}

\end{frontmatter}

\section{Introduction}

The theory of attractors plays an important role in understanding the
long time behavior of dynamical systems, see  Babin and Vishik \cite{BV},
Billotti and LaSalle \cite{BLS},  Chueshov \cite{Chu}, Hale \cite{Hale},
Ladyzhenskaya \cite{Ladyz}, Robinson \cite{JCR} and Temam \cite{Temam}.
For the autonomous theory, we consider a family of dissipative dynamical
systems parameterized by $\Lambda$ such that for each $\lambda\in\Lambda$
the corresponding dynamical system possesses a unique compact global
attractor $\A_\lambda\subseteq Y$, where $Y$ is a complete metric space
with metric $d_Y$.
Under very mild assumptions (see for example~\cite{HLR} and
the references therein)
the map $\lambda\mapsto \A_\lambda$ is known to be upper semicontinuous.
This means that
$$
\rho_Y(\A_\lambda,\A_{\lambda_0}) \to 0
\qquad\hbox{as}\qquad
\lambda\to\lambda_0
$$
where $\rho_Y(A,C)$ denotes the Hausdorff
semi-distance
\begin{equation}\label{defsemidist}
\rho_Y(A,C)=\adjustlimits\sup_{a\in A}\inf_{c\in C}d_Y(a,c).
\end{equation}
However, lower
semicontinuity
$$
\rho_Y(\A_{\lambda_0},\A_\lambda) \to 0
\qquad\hbox{as}\qquad
\lambda\to\lambda_0,
$$
and hence full continuity with respect to the Hausdorff metric,
is much harder to prove.

For autonomous systems, general results on lower semicontinuity require strict conditions on
the structure of the unperturbed global attractor, which are rarely satisfied
for complicated systems (see Hale and Raugel
\cite{HR} and Stuart and Humphries \cite{SH}). 
However,
Babin and Pilyugin \cite{BPY} and Luan et al.\  \cite{HOR1} showed, using the theory of Baire category, that
continuity holds for $\lambda_0$ in a residual
set $\Lambda_*\subseteq\Lambda$ under natural conditions
when $\Lambda$ is a complete metric space.
We recall this result for autonomous systems
as Theorem~\ref{auto} below.

Let $\Lambda$ and $X$ be complete metric spaces.  We will suppose that
$S_\lambda(\cdot)$
is a parameterized family of semigroups on $X$
for $\lambda\in \Lambda$ that satisfies the following properties:
\begin{itemize}
\item[(G1)] $S_\lambda(\cdot)$ has a global attractor $\A_\lambda$ for every $\lambda\in\Lambda$;
\item[(G2)] there is a bounded subset $D$ of $X$ such that $\A_\lambda\subseteq D$ for every $\lambda\in\Lambda$; and
\item[(G3)] for $t>0$, $S_\lambda(t)x$ is continuous in $\lambda$, uniformly for $x$ in bounded subsets of $X$.
\end{itemize}
Note that condition (G2) can be strengthened and (G3) weakened by
replacing {\it bounded\/} by {\it compact}.
These modified conditions will be referred to as
conditions (G2$'$) and (G3$'$).

\begin{theorem}
\label{auto}
  Under assumptions {\rm(G1--G3)} above---or under
  the assumptions {\rm(G1)}, {\rm(G2$'$)}
  and {\rm(G3$'$)}---$\A_\lambda$ is continuous in $\lambda$ at all
  $\lambda_0$ in a residual subset of $\Lambda$.  In particular the set
  of continuity points of $\A_\lambda$ is dense in $\Lambda$.

\end{theorem}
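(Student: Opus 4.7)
The plan is to invoke the standard Baire-category machinery for upper-semicontinuous set-valued maps (in the spirit of Fort's theorem), in three steps: establish upper semicontinuity of $\lambda\mapsto\A_\lambda$, present the continuity set as a $G_\delta$, and show each component is dense.

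For upper semicontinuity, fix $\lambda_0\in\Lambda$ and $\epsilon>0$. Since $\A_{\lambda_0}$ attracts $D$, one can choose $t_0>0$ with $\rho_Y(S_{\lambda_0}(t_0)D,\A_{\lambda_0})<\epsilon/2$. By (G3) (respectively (G3$'$)) there is a neighborhood $V$ of $\lambda_0$ on which $\sup_{x\in D}d_Y(S_\lambda(t_0)x,S_{\lambda_0}(t_0)x)<\epsilon/2$. The invariance $\A_\lambda=S_\lambda(t_0)\A_\lambda\subseteq S_\lambda(t_0)D$ together with the triangle inequality then yield $\rho_Y(\A_\lambda,\A_{\lambda_0})<\epsilon$ on $V$.

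Writing $d_H$ for the Hausdorff metric, set for each $n\in\N$
$$G_n:=\{\lambda\in\Lambda:\exists\,\delta>0\text{ with }d_H(\A_\mu,\A_{\mu'})<1/n\text{ for all }\mu,\mu'\in B(\lambda,\delta)\}.$$
Each $G_n$ is open, and the continuity set of $\lambda\mapsto\A_\lambda$ in $d_H$ is $\bigcap_n G_n$; since $\Lambda$ is a Baire space, residuality reduces to showing each $G_n$ is dense. Given an open $U\subseteq\Lambda$ and $n$, I would construct $\lambda_*\in U\cap G_n$ by a nested-ball argument. Start with $\lambda_1\in U$ and use upper semicontinuity to find a closed ball $\bar B(\lambda_1,r_1)\subseteq U$ with $\rho_Y(\A_\mu,\A_{\lambda_1})<1/(4n)$ throughout. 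Inductively, given $\bar B(\lambda_k,r_k)$, pick $\lambda_{k+1}$ in the interior whose attractor approximates the envelope $E_k:=\overline{\bigcup_{\mu\in\bar B(\lambda_k,r_k)}\A_\mu}$ from within to within $1/(2^{k+1}n)$; then shrink $r_{k+1}$ using upper semicontinuity so that $\rho_Y(\A_\mu,\A_{\lambda_{k+1}})<1/(2^{k+1}n)$ on $\bar B(\lambda_{k+1},r_{k+1})$. Completeness gives $\lambda_*\in\bigcap_k\bar B(\lambda_k,r_k)\subseteq U$, and telescoping the estimates yields $d_H(\A_{\lambda_*},\A_\mu)<1/n$ on a neighborhood of $\lambda_*$, so $\lambda_*\in G_n$.

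The main obstacle is the almost-maximal selection of $\lambda_{k+1}$: the envelope $E_k$ must be compact so that optimal approximators exist, and one must verify that almost-maximality genuinely delivers the \emph{lower}-semicontinuous side $\rho_Y(\A_{\lambda_*},\A_\mu)$ at the limit (upper semicontinuity alone handles the other side). Under (G2$'$)+(G3$'$), compactness of $E_k$ is immediate from compactness of $D$; under (G2)+(G3) one instead invokes the asymptotic compactness implicit in attractor theory---so that $S_\lambda(t_0)D$ is relatively compact for sufficiently large $t_0$---to obtain compactness of suitable envelopes. Propagating the dyadic constants through telescoping to produce the desired Hausdorff bound at $\lambda_*$ is then a careful but routine bookkeeping task.
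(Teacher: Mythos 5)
Your strategy (upper semicontinuity, oscillation sets $G_n$, then a nested-ball selection to force the lower-semicontinuous side at a limit point) is the classical Fort/Babin--Pilyugin route, which is not what this paper does: here one shows that $\lambda\mapsto\overline{S_\lambda(t)D}$ is continuous into ${\it CB}(X)$ for each fixed $t$, notes that invariance and attraction give $\overline{S_\lambda(n)D}\to\A_\lambda$ pointwise in the Hausdorff metric, and applies Theorem~\ref{BCT} on pointwise limits of continuous functions. Your upper-semicontinuity step and the identification of the continuity set with $\bigcap_n G_n$ are fine, but the induction has a genuine gap. The selection you describe --- choose $\lambda_{k+1}$ with $\rho_Y(E_k,\A_{\lambda_{k+1}})<1/(2^{k+1}n)$, where $E_k$ is the envelope over the \emph{current} ball of radius $r_k$ --- is in general impossible: a single attractor cannot approximate the union of all attractors over a ball of fixed radius from within. (Take $\A_\lambda=\{\lambda\}$ in $\R$; then $\rho_Y(E_k,\A_\mu)\ge r_k$ for every $\mu$ in the ball.) The workable form of ``almost-maximal selection'' nearly maximizes a bounded real- or integer-valued functional (e.g.\ the number of points of a fixed finite $\epsilon$-net of $K$ lying within $2\epsilon$ of $\A_\lambda$), for which near-optimizers always exist; near-attainment of the envelope does not. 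You flag this as ``the main obstacle'' but do not resolve it, and the telescoping bound $d_H(\A_{\lambda_*},\A_\mu)<1/n$ is exactly what hangs on it.

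Second, in the (G1)--(G3) case your argument needs compact envelopes, and you propose to obtain them from ``asymptotic compactness implicit in attractor theory,'' asserting that $S_\lambda(t_0)D$ is relatively compact for large $t_0$. That does not follow from (G1)--(G3): existence of a global attractor gives compactness of the $\omega$-limit set of $D$ for each fixed $\lambda$, not eventual compactness of $S_\lambda(t_0)D$ at any finite time (many asymptotically compact systems, e.g.\ damped hyperbolic equations, have attractors while $S(t)D$ is never precompact), and even pointwise-in-$\lambda$ compactness would not make $\overline{\bigcup_{\mu\in B}\A_\mu}$ compact. Compare the non-autonomous Theorem~\ref{thm23}, where exactly this envelope compactness must be \emph{assumed} as an extra hypothesis in case (ii). The merely-bounded case is precisely where the paper's pointwise-limit argument via Theorem~\ref{BCT} is needed, since it avoids compactness altogether; as written, your proof establishes at best the (G2$'$)/(G3$'$) half of the statement, and only after the selection step is repaired.
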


The proof developed
in \cite{HOR1}
of the above theorem,
which appears there as Theorem 5.1,
is more direct than
previous proofs (e.g.\ in \cite{BPY}) and can be modified
to establish analogous results for
the pullback attractors and uniform attractors of non-autonomous systems.
This is the main purpose of the present paper.
After briefly introducing some definitions and notations concerning
attractors and Baire category theory in Section~\ref{prelimsec},
in Section~\ref{pullbacksec} we prove
Theorem~\ref{thm23}, our main result concerning pullback
attractors.
Section~\ref{uniformsec} then contains
Theorem~\ref{thm31}, which provides similar results for uniform attractors.
In addition, we investigate the continuity of pullback and uniform
attractors on the entire parameter space $\Lambda$.
It was proved by Li and Kloeden
\cite{KL} (see also \cite{HOR1}) that when $\Lambda$
is compact, the continuity of the global attractors on $\Lambda$ is
equivalent to equi-attraction of the semigroups.
In Section~\ref{equi}, we extend this result and the notion of
equi-attraction to non-autonomous and uniform attractors.
Theorem~\ref{equiback} shows for pullback
attractors that continuity is equivalent to pullback equi-attraction,
while Theorem~\ref{equiuni} shows for uniform attractors that
continuity is equivalent to uniform equi-attraction.

We note that the continuity of pullback attractors is investigated
by Carvalho et al.\ \cite{CLR}, who extend the autonomous results to
non-autonomous systems, under strong conditions on the structure of the
pullback attractors.
Similarly, the notion of equi-attraction defined in Section~\ref{equi} is
a difficult property to discern for any concrete family of dynamical system.
In contrast, the continuity results from Sections~\ref{pullbacksec}
and~\ref{uniformsec} only require standard conditions
that are met in many applications.
We demonstrate this in Section~\ref{apps} with the Lorenz system of ODEs
and the two-dimensional Navier--Stokes equations.

\section{Preliminaries}\label{prelimsec}

We begin by setting our notation and recalling the definition of the Hausdorff metric.
Given a metric space $(Y,d_Y)$, denote by $B_Y(y,r)$ the ball of radius
$r$ centered at $y$,
$$
B_Y(y,r)=\{y\in Y:\ d_Y(x,y)<r\}.
$$
Write
$\Delta_Y$ for the symmetric Hausdorff distance
\begin{equation}
\Delta_Y(A,C)=\max(\rho_Y(A,C),\rho_Y(C,A))
\end{equation}
where $\rho_Y$ is the semi-distance between two
subsets $A$ and $C$ of $Y$ defined in (\ref{defsemidist}).
Denote by ${\it CB}(Y)$ the collection of all non-empty closed,
bounded subsets of a metric space $Y$, which is
itself a metric space with metric given by
the symmetric Hausdorff distance $\Delta_Y$.

In the same way that a continuous semigroup may be used to describe
an autonomous dynamical system, the concept of a non-autonomous
process may be used to describe a non-autonomous dynamical system.

\begin{definition}\label{processdef}
Let $(X,d_X)$ be a complete metric space. A \emph{process} $S(\cdot,\cdot)$ on $X$ is a two-parameter family of maps $S(t,s)\colon X\mapsto X$, $s\in\R$, $t\ge s$, such that
\begin{enumerate}
\item[{\rm(P1)}] $S(t,t)={\rm id}$;
\item[{\rm(P2)}] $S(t,\tau)S(\tau,s)=S(t,s)$ for all $t\ge\tau\ge s$; and
\item[{\rm(P3)}] $S(t,s)x$ is continuous in $x$, $t$, and $s$.
\end{enumerate}
\end{definition}

Given a non-autonomous process, there are two common ways to
characterize its asymptotic behavior:
roughly speaking,
the limit of $S(t,s)$ for a fixed $t$ as $s\to-\infty$ leads to
the definition of the pullback attractor, while the limit of
$S(t+s,s)$ as $t\to \infty$ leads to the uniform attractor
(given sufficient uniformity in $s$).
While both methods give rise to the same object for
autonomous dynamics, they may be different in the non-autonomous
case.

We begin with a formal definition of the pullback attractor, obtained by taking the limit as $s\to-\infty$.

\begin{definition}
A family of compact sets
$\A(\cdot)=\{\A(t)\colon t\in\R\}$ in $X$ is the
\emph{pullback attractor} for the process $S(\cdot,\cdot)$ if
\begin{enumerate}
\item[{\rm(A1)}] $\A(\cdot)$ is invariant: $S(t,s)\A(s)=\A(t)$ for all $t\ge s$;
\item[{\rm(A2)}] $\A(\cdot)$ is pullback attracting: for any bounded set $B$ in $X$ and $t\in\R$
$$
\rho_X\big(S(t,s)B,\A(t)\big)\to0\qquad\mbox{as}\qquad s\to-\infty;
$$
and
\item[{\rm(A3)}] $\A(\cdot)$ is minimal, in the sense that if $C(\cdot)$ is any other family of compact sets that satisfies (A1) and (A2) then $\A(t)\subseteq C(t)$ for all $t\in\R$.
\end{enumerate}
\end{definition}

The uniform attractor is obtained by taking the limit as $t\to\infty$.

\begin{definition}
A set $\mathbb A\subseteq X$ is the \emph{uniform attractor} if it is the minimal compact set such that
\begin{equation}\label{unicv}
\adjustlimits\lim_{t\to\infty}
	\sup_{s\in\R}\rho_X\big(S(t+s,s)B,{\mathbb A}\big)=0
\end{equation}
for any bounded $B\subseteq X$.
\end{definition}

We finish this section by stating
a few basic facts from the theory of Baire category
including an abstract residual continuity result.
Recall that a set is \emph{nowhere dense} if its closure
contains no non-empty open sets, and a set is \emph{residual} if its complement is the countable union
of  nowhere dense  sets.
It is a well-known fact that any residual subset of a
complete metric space is dense.

The following result,  an abstract version of Theorem 7.3 in Oxtoby \cite{Oxtoby}, was proved as Theorem 5.1 in \cite{HOR1}, and forms a key part of our proofs.

  \begin{theorem}\label{BCT}
Let $f_n\colon\Lambda\mapsto Y$ be a continuous map
  for each $n\in\N$, where $\Lambda$ is a complete metric space and $Y$ is
  any metric space. If $f$ is the pointwise limit of $f_n$, that is, if
$$
	f(\lambda)=\lim_{n\to\infty}f_n(\lambda)
\qquad\hbox{for each}\qquad\lambda\in\Lambda
$$
(implicit in this is the requirement that the limit exists) then the points of continuity of $f$ form a
  residual subset of $\Lambda$.

\end{theorem}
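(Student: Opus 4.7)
The plan is to characterize continuity points via the oscillation functional and then show that the set where the oscillation is small is open and dense. For each $\lambda\in\Lambda$ define
$$
\omega_f(\lambda)=\inf_{r>0}\,\mathrm{diam}\,f\bigl(B_\Lambda(\lambda,r)\bigr),
$$
and set $U_\varepsilon=\{\lambda\in\Lambda:\omega_f(\lambda)<\varepsilon\}$. A short check shows $U_\varepsilon$ is open, and that the set of continuity points of $f$ equals $\bigcap_{n\ge1}U_{1/n}$. So it suffices to prove each $U_\varepsilon$ is dense; then the continuity set is a countable intersection of dense open sets and hence residual in the complete metric space $\Lambda$.

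To prove density, fix $\varepsilon>0$ and a nonempty open set $V\subseteq\Lambda$. The idea is to upgrade the pointwise Cauchy property of $(f_n)$ to local uniform closeness on some open subset of $V$. Define the sets
$$
E_N=\bigl\{\lambda\in\Lambda:d_Y\bigl(f_j(\lambda),f_k(\lambda)\bigr)\le\varepsilon/4\ \text{for all}\ j,k\ge N\bigr\},\qquad N\in\N.
$$
Each $E_N$ is closed, because the $f_n$ are continuous. Since $f_n(\lambda)\to f(\lambda)$ the sequence $\bigl(f_n(\lambda)\bigr)_n$ is Cauchy at every $\lambda$, so $\bigcup_N E_N=\Lambda$. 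The open set $V$ is itself a Baire space (every open subset of a complete metric space is completely metrizable), and $V=\bigcup_N(V\cap E_N)$ expresses it as a countable union of relatively closed sets. Thus some $V\cap E_N$ has nonempty interior in $V$, which yields a nonempty open ball $W\subseteq V$ and an index $N$ with $d_Y(f_j(\lambda),f_k(\lambda))\le\varepsilon/4$ for all $\lambda\in W$ and $j,k\ge N$. Letting $k\to\infty$ gives the locally uniform bound $d_Y(f_N(\lambda),f(\lambda))\le\varepsilon/4$ for every $\lambda\in W$.

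Now the continuity of $f_N$ finishes the argument. Pick any $\lambda_0\in W$; by continuity of $f_N$ there is $\delta>0$ with $B_\Lambda(\lambda_0,\delta)\subseteq W$ and $d_Y(f_N(\lambda),f_N(\mu))<\varepsilon/4$ for $\lambda,\mu\in B_\Lambda(\lambda_0,\delta)$. The triangle inequality then gives
$$
d_Y(f(\lambda),f(\mu))\le d_Y(f(\lambda),f_N(\lambda))+d_Y(f_N(\lambda),f_N(\mu))+d_Y(f_N(\mu),f(\mu))<\tfrac{3\varepsilon}{4}
$$
for such $\lambda,\mu$, hence $\omega_f(\lambda_0)\le 3\varepsilon/4<\varepsilon$ and $\lambda_0\in U_\varepsilon\cap V$. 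This produces a point of $U_\varepsilon$ in every nonempty open $V$, establishing density and completing the proof.

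The main obstacle is the Baire step: one has to use completeness of $\Lambda$ together with continuity of the approximants $f_n$ to convert the pointwise Cauchy property into a locally uniform approximation $d_Y(f_N,f)\le\varepsilon/4$ on some open set. Once that uniform-on-an-open-set bound is secured, the triangle-inequality argument bounding the oscillation of $f$ is routine and does not require any additional hypothesis on $Y$.
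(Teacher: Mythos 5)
Your proof is correct: the oscillation functional, the openness of $U_\varepsilon$, the identification of the continuity set with $\bigcap_n U_{1/n}$, and the Baire-category argument on the closed sets $E_N$ to get a locally uniform bound $d_Y(f_N,f)\le\varepsilon/4$ on an open subset of $V$ are all carried out properly, and no completeness of $Y$ is needed since pointwise convergence already gives the Cauchy property. The paper does not reprove this result (it cites Theorem 7.3 of Oxtoby and Theorem 5.1 of \cite{HOR1}), and your argument is essentially the standard proof given in those references.
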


\section{Residual continuity of pullback attractors}\label{pullbacksec}

In this section we consider the continuity of pullback attractors.
Let $\Lambda$ be a complete metric space and $S_\lambda(\cdot,\cdot)$ a parameterized family of processes on $X$ with $\lambda\in \Lambda$.
Suppose that
\begin{itemize}
\item[(L1)] $S_\lambda(\cdot,\cdot)$ has a pullback attractor $\A_\lambda(\cdot)$ for every $\lambda\in\Lambda$;
\item[(L2)] there is a bounded subset $D$ of $X$ such that $\A_\lambda(t)\subseteq D$ for every $\lambda\in\Lambda$ and every $t\in\R$; and
\item[(L3)] for every $s\in\R$ and $t\ge s$, $S_\lambda(t,s)x$ is continuous in $\lambda$, uniformly for $x$ in bounded subsets of $X$.
\end{itemize}
We denote by (L2$'$) and (L3$'$) the assumptions (L2) and (L3), respectively, with {\it bounded}  replaced by {\it compact}.

The following result is proved for the autonomous case as Lemma 3.1 in \cite{HOR1}; we omit the proof for the non-autonomous case, which is identical.

\begin{lemma}\label{Dncts}
Assume either {\rm(L2)} and {\rm (L3)}, or {\rm(L2$'$)} and {\rm
(L3$'$)}.
Then for any $s\in\R$ and $t\ge s$, the map $\lambda\mapsto
\overline{S_\lambda(t,s)D}$ is continuous from $\Lambda$ into ${\it
CB}(X)$.
\end{lemma}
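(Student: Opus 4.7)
The plan is to reduce the lemma to a direct application of hypothesis (L3) (or (L3$'$)). Fix $\lambda_0\in\Lambda$, $s\in\R$, $t\ge s$, and $\varepsilon>0$; the goal is to produce $\delta>0$ such that $d_\Lambda(\lambda,\lambda_0)<\delta$ implies $\Delta_X(\overline{S_\lambda(t,s)D},\overline{S_{\lambda_0}(t,s)D})<\varepsilon$. Under (L2)--(L3) the set $D$ is bounded, and under (L2$'$)--(L3$'$) it is compact; in either case the relevant uniform continuity hypothesis on $S_\lambda(t,s)x$ applies with $x$ ranging over $D$, so one can choose $\delta>0$ with
\begin{equation*}
\sup_{x\in D} d_X\bigl(S_\lambda(t,s)x,\,S_{\lambda_0}(t,s)x\bigr)<\varepsilon
\qquad\text{whenever}\qquad d_\Lambda(\lambda,\lambda_0)<\delta.
\end{equation*}

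The next step is to turn this pointwise bound into a bound on each Hausdorff semi-distance. Given any $y\in S_\lambda(t,s)D$, write $y=S_\lambda(t,s)x$ for some $x\in D$; then $S_{\lambda_0}(t,s)x\in S_{\lambda_0}(t,s)D$ is within distance $\varepsilon$ of $y$, so taking the supremum in $y$ gives $\rho_X(S_\lambda(t,s)D,S_{\lambda_0}(t,s)D)\le\varepsilon$. Interchanging the roles of $\lambda$ and $\lambda_0$ in the same argument yields the symmetric bound. I would then observe that passing to closures does not enlarge either semi-distance, since $a\mapsto d_X(a,C)$ is continuous and satisfies $d_X(a,C)=d_X(a,\overline{C})$; consequently both semi-distances between $\overline{S_\lambda(t,s)D}$ and $\overline{S_{\lambda_0}(t,s)D}$ remain bounded by $\varepsilon$, proving continuity at $\lambda_0$.

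I do not anticipate any real obstacle: assumptions (L3) and (L3$'$) are designed for precisely this kind of estimate, and the closure step is standard. The lemma is essentially a repackaging of uniform continuity, which is why the autonomous version in \cite{HOR1} transfers verbatim to the non-autonomous setting simply by carrying along the extra parameter $s$.
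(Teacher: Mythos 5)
Your proof is correct and is essentially the argument the paper intends: the paper omits the proof, deferring to the autonomous case (Lemma 3.1 of \cite{HOR1}), which is exactly this uniform-continuity estimate over $D$ (bounded under (L2)/(L3), compact under (L2$'$)/(L3$'$)) followed by the observation that taking closures changes neither Hausdorff semi-distance. No gaps.
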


We also need the following related continuity result for
$S_\lambda(t,s)B$. Note that the result only treats sets $B\in {\it
CB}(K)$ for some compact $K$, which is crucial to the proof.

\begin{lemma}\label{jointcts}
Assume that
{\rm(L3$'$)} holds, and let $K$ be any compact subset of
$X$. Then for any $t\ge s$, the mapping $(\lambda,B)\mapsto S_\lambda(t,s)B$
is (jointly) continuous in $(\lambda,B)\in \Lambda\times  {\it CB}(K)$.
\end{lemma}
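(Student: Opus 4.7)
The plan is to fix a target point $(\lambda_0, B_0) \in \Lambda \times {\it CB}(K)$, take any sequence $(\lambda_n, B_n) \to (\lambda_0, B_0)$ in $\Lambda \times {\it CB}(K)$ (the latter with the Hausdorff metric $\Delta_X$), and show that $\Delta_X(S_{\lambda_n}(t,s)B_n, S_{\lambda_0}(t,s)B_0) \to 0$. The natural splitting is via the triangle inequality
\begin{equation*}
\Delta_X\bigl(S_{\lambda_n}(t,s)B_n, S_{\lambda_0}(t,s)B_0\bigr)
\le \Delta_X\bigl(S_{\lambda_n}(t,s)B_n, S_{\lambda_0}(t,s)B_n\bigr)
+ \Delta_X\bigl(S_{\lambda_0}(t,s)B_n, S_{\lambda_0}(t,s)B_0\bigr),
\end{equation*}
and I would argue that each piece tends to zero, each time exploiting a different role played by the compactness of $K$.

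For the first term I would use (L3$'$): since $K$ is compact and every $B_n$ lies inside $K$, the assumption gives
\begin{equation*}
\varepsilon_n \eqdef \sup_{x\in K} d_X\bigl(S_{\lambda_n}(t,s)x, S_{\lambda_0}(t,s)x\bigr) \to 0.
\end{equation*}
For any $x \in B_n \subseteq K$ the point $S_{\lambda_n}(t,s)x$ lies within $\varepsilon_n$ of $S_{\lambda_0}(t,s)x \in S_{\lambda_0}(t,s)B_n$, and vice versa, so both semi-distances between $S_{\lambda_n}(t,s)B_n$ and $S_{\lambda_0}(t,s)B_n$ are bounded by $\varepsilon_n$, giving the first term $\to 0$.

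For the second term I would use the property (P3), which makes $S_{\lambda_0}(t,s)\colon X \to X$ continuous, together with the compactness of $K$ to upgrade this to uniform continuity on $K$. Given $\varepsilon>0$, choose $\delta>0$ such that $d_X(x,y)<\delta$ and $x,y\in K$ imply $d_X(S_{\lambda_0}(t,s)x, S_{\lambda_0}(t,s)y)<\varepsilon$. For $n$ large enough, $\Delta_X(B_n, B_0)<\delta$, and then a standard argument (each point of $B_n$ admits a $\delta$-close partner in $B_0$ and vice versa) yields $\Delta_X(S_{\lambda_0}(t,s)B_n, S_{\lambda_0}(t,s)B_0)\le \varepsilon$. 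Combining the two bounds gives the joint continuity.

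There is no real obstacle here, but it is worth emphasising that the restriction $B \in {\it CB}(K)$ for a \emph{compact} $K$ is used twice in an essential way: once to invoke the uniform-on-compacta convergence in (L3$'$) for the family $\{S_\lambda(t,s)\}$, and once to promote the pointwise continuity of the fixed map $S_{\lambda_0}(t,s)$ into uniform continuity, which is what is needed to convert Hausdorff convergence of the sets $B_n$ into Hausdorff convergence of their images. Without a uniformly compact ambient set for the $B_n$'s, neither step would go through in this simple form.
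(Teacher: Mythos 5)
Your argument is correct and follows essentially the same route as the paper: both proofs combine (L3$'$) to compare $S_\lambda(t,s)$ with $S_{\lambda_0}(t,s)$ uniformly over the compact set $K$, and the uniform continuity of $x\mapsto S_{\lambda_0}(t,s)x$ on $K$ to handle the perturbation of the set $B$. The only cosmetic difference is that you apply the triangle inequality at the level of Hausdorff distances between sets, whereas the paper applies it pointwise to pairs $b\in B$, $b_0\in B_0$ before passing to the semi-distances.
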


\begin{proof}
Since every $B\in {\it CB}(K)$ is compact and
$S_\lambda(t,s)x$ is continuous in $x$, it follows that the
image $S_\lambda(t,s)B$ is compact too.
Now suppose that $s\in\R$, $t\ge s$,
$\lambda_0\in\Lambda$, $B_0\in {\it CB}(K)$ and $\epsilon>0$.
Condition~(L3$'$) ensures that there exists a $\delta_1\in (0,1)$ such that
$$
d_\Lambda(\lambda_0,\lambda)<\delta_1
\quad\hbox{implies that}\quad
d_X\big(S_\lambda(t,s)x,S_{\lambda_0}(t,s)x\big)<\epsilon/2
\quad\mbox{for every}\quad
x\in K.
$$
Since $K$ is compact, the map $x\mapsto S_{\lambda_0}(t,s)x$ is
uniformly continuous on $K$;
in particular, there is a $\delta_2\in(0,1)$ such that
$$
d_X(x,y)<\delta_2\quad\hbox{with}\quad x,y\in K
\quad\hbox{implies that}\quad
d_X\big(S_{\lambda_0}(t,s)x,S_{\lambda_0}(t,s)y\big)<\epsilon/2.
$$
Set $\delta=\min\{\delta_1,\delta_2\}$.

Take $\lambda\in \Lambda$ with $d_\Lambda(\lambda,\lambda_0)<\delta$ and  $B\in {\it CB}(K)$ with $\Delta_X(B,B_0)<\delta$.
For any $b\in B$ there is a $b_0\in B_0$ such that $d_X(b,b_0)<\delta$.
Therefore
\begin{align}
d_X\big(S_\lambda(t,s)b,S_{\lambda_0}(t,s)b_0\big)
&\le d_X\big(S_\lambda(t,s)b,S_{\lambda_0}(t,s)b\big)\nonumber\\
&\qquad\qquad+ d_X\big(S_{\lambda_0}(t,s)b,S_{\lambda_0}(t,s)b_0\big)
\nonumber\\
&<\epsilon/2+\epsilon/2=\epsilon.\nonumber
\end{align}
Hence
\begin{equation}\label{r1}
\rho_X\big(S_\lambda(t,s)B,S_{\lambda_0}(t,s)B_0\big)\le \epsilon.
\end{equation}
The hypothesis $\Delta_X(B,B_0)<\delta$ also implies that
for any $b_0\in B_0$, there is $b\in B$ such that
$d_X(b,b_0)<\delta$.  Consequently
\begin{equation}\label{r2}
\rho_X\big(S_\lambda(t,s)B_0,S_{\lambda_0}(t,s)B\big)\le \epsilon.
\end{equation}
Combining \eqref{r1} and \eqref{r2} now yields
$\Delta_X\big(S_\lambda(t,s)B,S_{\lambda_0}(t,s)B_0\big)\le \epsilon$, which
proves the joint continuity as claimed.
  \end{proof}

As (L3) is a stronger hypothesis than (L3$'$)
we note that Lemma~\ref{jointcts} also holds under~(L3).
We now use Lemma~\ref{jointcts} to prove
the residual continuity
of pullback attractors.

\begin{theorem}\label{thm23}
Let $S_\lambda(\cdot,\cdot)$ be a family of processes
on $(X,d)$ each satisfying {\rm(P1--P3)}
and suppose that {\rm(L1)} holds along with either
\begin{itemize}
\item[\rm (i)] {\rm(L2$'$)} and {\rm(L3$'$)}, or
\item[\rm (ii)] {\rm(L2)}, {\rm(L3)}, and
for any  $\lambda_0\in\Lambda$ and  $t\in\R$,
there exists  $\delta>0$ such that
\begin{equation}\label{E}
\overline{\bigcup_{B_\Lambda (\lambda_0,\delta) }  \A_\lambda(t) }
\qquad\mbox{is compact}.
\end{equation}
\end{itemize}
Then, there exists a residual set $\Lambda_*$ in $\Lambda$ such that
for every $t\in \R$ the function $\lambda \mapsto \A_\lambda(t)$ is
continuous at each $\lambda\in \Lambda_*$.
\end{theorem}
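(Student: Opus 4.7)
The plan is to apply the abstract Baire category result (Theorem \ref{BCT}) at each fixed time $t$ to obtain a residual set of continuity points $\Lambda_t$, and then leverage the invariance (A1) of the pullback attractor together with Lemma \ref{jointcts} to transfer continuity from a countable collection of base times to all of $\R$, yielding a single residual set $\Lambda_*$.

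For a fixed $t\in\R$ I would define $f^t_n\colon\Lambda\to{\it CB}(X)$ by
$$
f^t_n(\lambda) = \overline{S_\lambda(t, t-n)D},\qquad n\in\N.
$$
Lemma \ref{Dncts} shows each $f^t_n$ is continuous into $({\it CB}(X),\Delta_X)$ under either set of hypotheses. To see $f^t_n(\lambda)\to\A_\lambda(t)$ pointwise in the Hausdorff metric, observe that pullback attraction (A2) applied to the bounded set $D$ yields $\rho_X(f^t_n(\lambda),\A_\lambda(t))\to 0$ as $n\to\infty$, while invariance (A1) together with (L2) gives
$$
\A_\lambda(t) \;=\; S_\lambda(t, t-n)\A_\lambda(t-n) \;\subseteq\; S_\lambda(t,t-n)D \;\subseteq\; f^t_n(\lambda),
$$
so that $\rho_X(\A_\lambda(t),f^t_n(\lambda))=0$ for every $n$. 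Theorem \ref{BCT} then produces a residual subset $\Lambda_t\subseteq\Lambda$ at which $\lambda\mapsto\A_\lambda(t)$ is continuous.

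To promote this to a residual set working simultaneously for every $t\in\R$, I would set $\Lambda_* = \bigcap_{m\in\Z}\Lambda_m$, which is residual as a countable intersection of residual sets in the complete metric space $\Lambda$. Given $t\in\R$ and $\lambda_0\in\Lambda_*$, pick $m\in\Z$ with $m\le t$; invariance then gives $\A_\lambda(t)=S_\lambda(t,m)\A_\lambda(m)$ for every $\lambda$. Since $\lambda_0\in\Lambda_m$, the map $\lambda\mapsto\A_\lambda(m)$ is continuous at $\lambda_0$. Under hypothesis (i) the compact set $K=D$ contains every $\A_\lambda(m)$, so Lemma \ref{jointcts} applies directly to conclude that $\lambda\mapsto S_\lambda(t,m)\A_\lambda(m)=\A_\lambda(t)$ is continuous at $\lambda_0$. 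Under hypothesis (ii), condition \eqref{E} provides $\delta>0$ for which $K=\overline{\bigcup_{B_\Lambda(\lambda_0,\delta)}\A_\lambda(m)}$ is compact, and since (L3) implies (L3$'$), Lemma \ref{jointcts} again applies on this neighborhood and delivers the required continuity.

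The principal obstacle is the coordination of continuity across the uncountable parameter $t$: a bare intersection over all $t\in\R$ of residual sets need not be residual, so one must find a way to reduce to countably many times. The invariance identity of the pullback attractor does exactly this, turning continuity at a single base time $m$ into continuity at every later $t$. A secondary subtlety is that pointwise convergence in Theorem \ref{BCT} must hold in the symmetric Hausdorff metric, which is delicate because pullback attraction only controls one of the two semi-distances; invariance, together with the containment $\A_\lambda(\cdot)\subseteq D$, is precisely what forces the other semi-distance to be identically zero. Finally, the extra hypothesis \eqref{E} in case (ii) substitutes for the loss of global compactness of $D$ and is exactly what Lemma \ref{jointcts} demands.
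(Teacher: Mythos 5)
Your proposal is correct and follows essentially the same route as the paper: continuity of $\lambda\mapsto\overline{S_\lambda(t,s)D}$ via Lemma~\ref{Dncts}, Hausdorff convergence to $\A_\lambda(t)$ from (A2) plus the inclusion $\A_\lambda(t)\subseteq S_\lambda(t,s)D$ forced by invariance, Theorem~\ref{BCT} at integer times, intersection over $\Z$, and transfer to arbitrary $t$ via $\A_\lambda(t)=S_\lambda(t,m)\A_\lambda(m)$ and Lemma~\ref{jointcts}. Your choice to invoke \eqref{E} at the integer base time $m$ (so that $K$ actually contains the sets $\A_\lambda(m)$ being fed into Lemma~\ref{jointcts}) is the right reading and slightly cleaner than the paper's phrasing, which defines $K$ using $\A_\lambda(t)$ at the non-integer time.
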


\begin{proof}
From Lemma~\ref{Dncts} it follows that for each $n\in\Z$ and $s<n$
the function $\lambda\mapsto\overline{S_\lambda(n,s)D}$ is continuous.
Moreover, since by either (L2) or (L2$'$) we have $D\supseteq\A_\lambda(s)$,
then from the invariance
of the attractor (A1) it follows that
\begin{equation}\label{SA0}
\overline{S_\lambda(n,s)D}\supseteq S_\lambda(n,s)D\supseteq
S_\lambda(n,s)\A_\lambda(s)=\A_\lambda(n)
\qquad\mbox{for every}\qquad s\le n.
\end{equation}
Therefore,
the pullback attraction property (A2) yields
 \begin{equation}\label{Alan}
  \A_\lambda(n)=
	\lim_{s\to-\infty}\overline{S_\lambda(n,s)D},
  \end{equation}
where the convergence is with respect to the Hausdorff metric.
It follows from Theorem~\ref{BCT} that there is a residual set $\Lambda_n$
of $\Lambda$ at which the map $\lambda\in\Lambda \mapsto\A_\lambda(n)$
is continuous.
Since the countable intersection of residual sets is still residual,
then $\Lambda_* = \bigcap_{n\in\Z}\Lambda_n$ is a residual set
at which $\lambda\mapsto \A_\lambda(n)$
is continuous for every $n\in\Z$.

We now use the invariance of $\A_\lambda(\cdot)$ to obtain continuity
for every $t\in\R$.
For $t\notin\Z$ there is $n\in\Z$ such that $t\in(n,n+1)$.
Moreover,
\begin{equation}\label{inter}
	\A_\lambda(t)=S_\lambda(t,n)\A_\lambda(n).
\end{equation}
In case (i) set $K=D$; in case (ii) define
$$
	K=\overline{\bigcup_{B_\Lambda (\lambda_0,\delta)} \A_\lambda(t)}
$$
where $\delta>0$ has been chosen by (\ref{E})
such that $K$ is compact.
Since $\A_\lambda(n)$ is continuous at $\lambda\in \Lambda_*$
and $\A_\lambda(n)\subseteq K$
for all $\lambda\in B_\Lambda(\lambda_0,\delta)$,
Lemma~\ref{jointcts} guarantees that $S_\lambda(t,n)B$
is continuous in $(\lambda,B)\in \Lambda\times {\it CB}(K)$.
Viewing \eqref{inter} as a composition of continuous functions
now yields the continuity of $\A_\lambda(t)$ at $\lambda\in \Lambda_*$.
\end{proof}

\section{Residual continuity of uniform attractors}\label{uniformsec}

This section develops the theory of residual continuity
of uniform attractors with respect to a parameter.
A key component of the proof is the expression for the uniform attractor
as a union of the uniform $\omega$-limit sets given by
\begin{equation}\label{AAis}
	\AA_\lambda=\overline{\bigcup_{n\in\N}\Omega_\lambda(B_X(0,n))},
\end{equation}
where, as in Chapter VII of \cite{CV},
we define
$$
	\Omega_\lambda(B)=\bigcap_{\tau\in\R}\ \overline{\bigcup_{s\in\R}\
	\bigcup_{t\ge\tau}S_\lambda(t+s,s)B}
\qquad\hbox{for any set}\qquad B\subseteq X.
$$

We can now state our main result on uniform attractors.

\begin{theorem}\label{thm31}
Suppose that there exists a compact set $K\subseteq X$ such that
\begin{itemize}
\item[\rm (a)] for every bounded $B\subseteq X$ and
	each $\lambda\in\Lambda$ there exists a $t_{B,\lambda}$ such that
\begin{equation}\label{U}
S_\lambda(t+s,s)B\subseteq K\qquad\mbox{for all}\qquad
t\ge t_{B,\lambda}\quad\mbox{and}\quad s\in\R;
\end{equation}
and
\item[\rm (b)]
for any $t>0$ the mapping $S_\lambda(t+ s,s)x$ is continuous
in $\lambda\in\Lambda$ uniformly for $s\in\R$ and $x\in K$.
\end{itemize}
Then the uniform attractor
$\AA_\lambda$ is continuous in $\lambda$ at a residual subset of $\Lambda$.
\end{theorem}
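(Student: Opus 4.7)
The strategy is to mirror the proof of Theorem~\ref{thm23}: exhibit $\AA_\lambda$ as the pointwise Hausdorff limit of a sequence of maps $f_n\colon\Lambda\to{\it CB}(X)$ that are each continuous in $\lambda$, and then invoke Theorem~\ref{BCT}. As a preliminary reduction, I would observe that $\AA_\lambda=\Omega_\lambda(K)$. For any bounded $B$ and any $\tau'\in\R$, once $t\ge\tau'+t_{B,\lambda}$ the semigroup identity and (a) give $S_\lambda(t+s,s)B\subseteq S_\lambda(t+s,\sigma)K$ with $\sigma=s+t_{B,\lambda}$, and after the change of variables $t'=t-t_{B,\lambda}$, $\sigma=s+t_{B,\lambda}$ the set $\overline{\bigcup_{t\ge\tau'+t_{B,\lambda},\,s}S_\lambda(t+s,s)B}$ is contained in $\overline{\bigcup_{t'\ge\tau',\,\sigma}S_\lambda(t'+\sigma,\sigma)K}$, so $\Omega_\lambda(B)\subseteq\Omega_\lambda(K)$; combined with $\Omega_\lambda(K)\subseteq\AA_\lambda$ from~\eqref{AAis} this gives the reduction.

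For the main step, I would take
$$
f_n(\lambda)\eqdef\overline{\bigcup_{s\in\R}S_\lambda(n+s,s)K}
$$
and prove that $f_n(\lambda)\to\AA_\lambda$ in the Hausdorff metric $\Delta_X$ as $n\to\infty$ for every $\lambda$. The direction $\rho_X(f_n(\lambda),\AA_\lambda)\to 0$ is exactly~\eqref{unicv} applied with $B=K$. The reverse direction is where (a) enters essentially, via a shift argument: given $x\in\AA_\lambda=\Omega_\lambda(K)$ written as $x=\lim_k S_\lambda(t_k+s_k,s_k)y_k$ with $t_k\to\infty$ and $y_k\in K$, whenever $t_k-n\ge t_{K,\lambda}$ the identity
$$
S_\lambda(t_k+s_k,s_k)y_k = S_\lambda\bigl(n+s'_k,s'_k\bigr)\bigl[S_\lambda(s'_k,s_k)y_k\bigr],\qquad s'_k=s_k+(t_k-n),
$$
writes the approximating point as an element of $f_n(\lambda)$, since $S_\lambda(s'_k,s_k)y_k\in K$ by (a). In fact $\AA_\lambda\subseteq f_n(\lambda)$ for every $n\ge t_{K,\lambda}$, which also places the tail of $(f_n(\lambda))_n$ inside the compact set $K$.

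Continuity of each $f_n$ in $\lambda$ is then a direct consequence of~(b): given $\epsilon>0$ and $\lambda_0\in\Lambda$, (b) supplies $\delta>0$ with $d_\Lambda(\lambda,\lambda_0)<\delta$ implying $d_X(S_\lambda(n+s,s)x,S_{\lambda_0}(n+s,s)x)<\epsilon$ uniformly for $s\in\R$ and $x\in K$, which after taking closures yields $\Delta_X(f_n(\lambda),f_n(\lambda_0))\le\epsilon$. Applying Theorem~\ref{BCT} to $\{f_n\}$ then delivers a residual set on which the pointwise limit $\AA_\lambda$ is continuous. I expect the main obstacle to be the shift argument above, since it is the only place where the absorption hypothesis (a) is used non-trivially and has no direct counterpart in the autonomous or pullback cases; a subsidiary technical point is that $f_n(\lambda)$ lies in the bounded set $K$ only once $n\ge t_{K,\lambda}$, which calls for a little bookkeeping to fit the $f_n$ into the framework of Theorem~\ref{BCT}.
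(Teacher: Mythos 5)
Your proposal is correct and takes essentially the same route as the paper's proof: both realize $\AA_\lambda$ as the pointwise Hausdorff limit of the continuous maps $\lambda\mapsto\overline{\bigcup_{s\in\R}S_\lambda(t+s,s)K}$, using hypothesis (a) to obtain the inclusion $\AA_\lambda\subseteq\overline{\bigcup_{s\in\R}S_\lambda(t+s,s)K}$ (the paper via the set-inclusion chain \eqref{x1}--\eqref{x2}, you via the equivalent reduction $\AA_\lambda=\Omega_\lambda(K)$ and a sequence/shift argument), uniform attraction for the other semidistance, hypothesis (b) for continuity in $\lambda$, and Theorem~\ref{BCT} to conclude. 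The differences are purely presentational.
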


In the preceding theorem,
assumption
(a) is sufficient
for the existence of a uniform attractor
given by the uniform $\omega$-limit (\ref{AAis})
with $\lambda$ fixed,
and
assumption (b) provides some uniform continuity of the
processes $S_\lambda$ in a way that depends only on the elapsed time.
More specifically,
given $\lambda_0\in\Lambda$, $t>0$ and $\epsilon>0$, there is $\delta>0$
depending only on $\lambda_0$, $t$ and $\epsilon$
such that for any $\lambda\in \Lambda$
with $d_\Lambda(\lambda,\lambda_0)<\delta$
$$d_X\big(S_\lambda(t+ s,s)x,S_{\lambda_0}(t+ s,s)x\big) < \epsilon
\qquad\hbox{for all}\qquad
s\in\R\quad\hbox{and}\quad x\in K.$$

\begin{proof}
Take $\lambda\in\Lambda$.  Applying \eqref{unicv} to $S=S_\lambda$ gives
$$
\lim_{t\to\infty}\sup_{s\in\R}
	\rho_X\big(S_\lambda(t+s,s)B,\AA_\lambda\big)=0
$$
for any bounded $B\subseteq X$. Since
$$\rho_X\bigg(\overline{\bigcup_{s\in\R}S_\lambda(t+s,s)B},\AA_\lambda\bigg)
	=\rho_X\bigg(\bigcup_{s\in\R}S_\lambda(t+s,s)B,\AA_\lambda\bigg)
	\le \sup_{s\in\R} \rho_X \big(S_\lambda(t+s,s)B,\AA_\lambda\big),$$
we obtain
\begin{equation}\label{unifcgve}
\lim_{t\to\infty}
	\rho_X\bigg(\overline{\bigcup_{s\in\R}S_\lambda(t+s,s)B},
		\AA_\lambda\bigg)=0.
\end{equation}

Let $T_n=\tau_{B,\lambda}$
for $n\in\N$
be given by (\ref{U})
where $B=B_X(0,n)$.
Then
\begin{align*}
&\Omega_\lambda(B_X(0,n))
	\subseteq\bigcap_{\tau\ge T_n}\
	\overline{\bigcup_{s\in\R}\ \bigcup_{t\ge\tau}S_\lambda(t+s,s)B_X(0,n)}\\
  &=\bigcap_{\tau\ge T_n}\
	\overline{\bigcup_{s\in\R}\
	\bigcup_{t\ge\tau}S_\lambda(t+s,T_n+s)S_\lambda(T_n+s,s)B_X(0,n)}\\
  &\subseteq\bigcap_{\tau\ge T_n}\ \overline{\bigcup_{s\in\R}\
	\bigcup_{t\ge\tau}S_\lambda(t+s,T_n+s)K}
  =\bigcap_{\tau\ge T_n}\
	\overline{\bigcup_{\eta\in\R}\
	\bigcup_{t\ge\tau}S_\lambda(t-T_n+\eta,\eta)K}\\
  &=\bigcap_{\tau\ge T_n}\ \overline{\bigcup_{\eta\in\R}\
	\bigcup_{s\ge \tau-T_n}S_\lambda(s+\eta,\eta)K}
  =\bigcap_{t\ge0}\ \overline{\bigcup_{\eta\in\R}\
	\bigcup_{s\ge t}S_\lambda(s+\eta,\eta)K}.
\end{align*}
It follows from (\ref{AAis}) that
\begin{equation}\label{x1}
	\AA_\lambda\subseteq\bigcap_{t\ge0}\
	\overline{\bigcup_{\eta\in\R}\
		\bigcup_{s\ge t}S_\lambda(s+\eta,\eta)K}.
\end{equation}

Let $T=\tau_{K,\lambda}$ in \eqref{U}.
By \eqref{x1} we have for $t\ge 0$ that
\begin{align*}
\AA_\lambda
&\subseteq \overline{\bigcup_{\eta\in\R}\ \bigcup_{s\ge t+T}
	S_\lambda(s+\eta,\eta)K} \\
&\subseteq \overline{\bigcup_{\eta\in\R}\ \bigcup_{s\ge t+T}
	S_\lambda\big(t+(s-t+\eta),s-t+\eta\big) S_\lambda(s-t+\eta,\eta)K} \\
&\subseteq \overline{\bigcup_{\eta\in\R}\ \bigcup_{s\ge t+T}
	S_\lambda\big(t+(s-t+\eta),s-t+\eta\big) K}.
\end{align*}
Thus
\begin{equation}\label{x2}
\AA_\lambda
\subseteq \overline{\bigcup_{z\in\R} S_\lambda(t+z,z) K}.
\end{equation}
Taking $B=K$ in \eqref{unifcgve} yields
\begin{equation}\label{basiclim}
\Delta_X\bigg(\overline{\bigcup_{s\in\R}S_\lambda(t+s,s)K},\AA_\lambda\bigg)
	\to0\qquad\mbox{as}\qquad t\to\infty.
\end{equation}

Define $K_\lambda(t)=\bigcup_{s\in\R}S_\lambda(t+s,s)K$,
fix $\lambda_0\in \Lambda$ and let $t>0$.
Given $\epsilon >0$ choose $\delta>0$ as in (b).
Then for any $x\in K$, we have
$$ d_X\big(S_\lambda(t+ s,s)x,S_{\lambda_0}(t+ s,s)x\big)
	< \epsilon\qquad\text{for any}\qquad s\in\R. $$
Hence
$$ \rho_X\big(K_\lambda(t),K_{\lambda_0}(t)\big)\le \epsilon
	\qquad\mbox{and}\qquad
	\rho_X\big(K_{\lambda_0}(t),K_{\lambda}(t)\big)\le \epsilon$$
imply that
$$ \Delta_X\big(\overline{K_\lambda(t)},\overline{K_{\lambda_0}(t)}\big)
	\le \epsilon.$$
Consequently
$$
	\lambda\mapsto \overline{K_\lambda(t)}
	=\overline{\bigcup_{s\in\R}S_\lambda(t+s,s)K}
$$
is continuous from $\Lambda$ into ${\it BC}(X)$.
The result now follows from \eqref{basiclim} and Theorem~\ref{BCT}.
\end{proof}

\section{Continuity everywhere and equi-attraction}\label{equi}

In this section, we extend the notion of equi-attraction
and the results on continuity of global attractors
with respect to a parameter $\Lambda$ in \cite{KL}
to non-autonomous systems.
We first show that that the continuity of pullback attractors
with respect to a parameter $\lambda\in\Lambda$ is equivalent
to pullback equi-attraction when $\Lambda$ is compact.
Next, we prove similar
results for uniform equi-attraction and uniform attractors.
Our methods are based on those used in Section 4 of \cite{HOR1}.

Assume~(L1) and~(L2) throughout this section where
$D$ is the set specified in~(L2).
Now
consider the following conditions:
\begin{enumerate}
\item[(U1)]
Pullback equi-dissipativity at time $t\in \R$: there exists $s_0\le t$ and a bounded set $B$ such that
$$ S_\lambda(t,s)D\subseteq B
\qquad\hbox{for every}\qquad
	s\le s_0\quad\hbox{and}\quad \lambda\in \Lambda.$$
\item[(U2)] Pullback equi-attraction at time $t\in \R$:
$$ \lim_{s\to -\infty} \sup_{\lambda\in \Lambda}
	\rho_X \big(S_\lambda(t,s)D,\A_\lambda(t)\big)=0.$$
	
\item[(U3)] There is a bounded set $D_1$ and a function $s_*(t)$ such
that $s_*(t)\le t$ and
\begin{equation}\label{SD1}
S_\lambda(t,s)D_1\subseteq D_1
\qquad\hbox{for every}\qquad
	s\le s_*(t)\quad \hbox{and}\quad\lambda\in \Lambda.
\end{equation}
\end{enumerate}

We remark that (U3) is the uniform version of (U1)
commonly obtained while proving the existence of pullback attractors.
In the autonomous case (U3) is identical to condition
(4.5) in \cite{HOR1}.
Our analysis here relies on the following version of Dini's theorem, which also appears in \cite{HOR1}.

\begin{lemma}[Theorem 4.1 in \cite{HOR1}]\label{Dini}
Let $K$ be a compact metric space and $Y$ be a metric space.
For each $n\in \N$, let $f_n\colon K\mapsto Y$ be a continuous map.
Assume $f_n$ converges to a continuous function
$f\colon K\mapsto Y$ as $n\to\infty$ in the following monotonic way
$$
d_Y\big(f_{n+1}(x),f(x)\big)\le d_Y\big(f_{n}(x),f(x)\big)
	\qquad\hbox{for all }
n\in\N\quad\hbox{ and for every } x\in K.
$$
Then $f_n$ converges to $f$ uniformly on $K$ as $s\to\infty$.
\end{lemma}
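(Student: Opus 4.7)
\textbf{Proof plan for Lemma \ref{Dini}.} The plan is to run the standard Dini-style argument, adapted to a target metric space rather than $\R$. Fix $\epsilon>0$ and define the real-valued error function $g_n\colon K\to[0,\infty)$ by $g_n(x)=d_Y(f_n(x),f(x))$. Since $f_n$ and $f$ are continuous into $Y$ and $d_Y$ is continuous on $Y\times Y$, each $g_n$ is continuous. The monotonicity hypothesis translates directly into $g_{n+1}(x)\le g_n(x)$ for all $x\in K$ and $n\in\N$, and the pointwise convergence $f_n(x)\to f(x)$ becomes $g_n(x)\downarrow 0$ for every $x\in K$.

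Next I would set $U_n=g_n^{-1}([0,\epsilon))=\{x\in K:g_n(x)<\epsilon\}$. By continuity of $g_n$, each $U_n$ is open in $K$. The monotonicity $g_{n+1}\le g_n$ forces the nesting $U_n\subseteq U_{n+1}$. Pointwise convergence $g_n(x)\to 0$ ensures that for each $x\in K$ there is some $n$ with $g_n(x)<\epsilon$, i.e.\ $x\in U_n$; hence $\bigcup_{n\in\N}U_n=K$. Compactness of $K$ yields a finite subcover, and because the $U_n$ are nested, this means $U_N=K$ for some $N\in\N$. Consequently $g_n(x)<\epsilon$ for every $n\ge N$ and every $x\in K$, which is exactly uniform convergence of $f_n$ to $f$ on $K$.

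There is no real obstacle here; the only subtle point compared to the classical scalar Dini theorem is that one must observe that continuity of both $f_n$ and $f$ (and of the metric $d_Y$) is what makes the error function $g_n$ continuous, so that the sets $U_n$ are genuinely open. The hypothesis that the limit $f$ is continuous is essential in this step, just as in the classical statement. Once $g_n$ is continuous and monotone decreasing to $0$, the rest is a textbook compactness-plus-nesting argument, and the conclusion follows.
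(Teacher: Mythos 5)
Your argument is correct and complete: the error function $g_n(x)=d_Y(f_n(x),f(x))$ is continuous because $f_n$, $f$, and the metric are, the sublevel sets $U_n=\{x:g_n(x)<\epsilon\}$ are open and nested by monotonicity, they cover $K$ by pointwise convergence, and compactness forces $U_N=K$ for some $N$, giving uniform convergence. The paper itself gives no proof of this lemma --- it is imported as Theorem 4.1 of the cited reference --- and your argument is precisely the standard Dini compactness argument that establishes it, with the one genuinely relevant observation (that continuity of the limit $f$ is needed to make $g_n$ continuous) correctly identified.
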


First, we deal with the pullback attractors.

\begin{theorem}\label{equiback}
Fix $t\in \R$. If {\rm (U1)} and {\rm (U2)} hold
then $\A_\lambda(t)$ is continuous at every $\lambda\in \Lambda$.
Conversely, suppose that  $\Lambda$ is a compact metric space and that
{\rm (U3)} holds; then if $\lambda\mapsto \A_\lambda(t)$
is continuous on $\Lambda$
then {\rm (U2)} holds with $D$ replaced with $D_1$.
\end{theorem}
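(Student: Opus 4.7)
The theorem naturally splits into two implications. For the forward direction I would combine the uniform pullback attraction (U2) with the continuity of $\lambda\mapsto\overline{S_\lambda(t,s)D}$ (Lemma~\ref{Dncts}) via a triangle inequality; for the converse I would set up a monotonically decreasing sequence of continuous set-valued maps on the compact space $\Lambda$ and invoke the Dini-type result (Lemma~\ref{Dini}).

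\textbf{Forward direction.} Fix $\lambda_0\in\Lambda$ and $\epsilon>0$. The invariance (A1) together with $\A_\lambda(s)\subseteq D$ yields $\A_\lambda(t)=S_\lambda(t,s)\A_\lambda(s)\subseteq\overline{S_\lambda(t,s)D}$, so the full Hausdorff distance collapses to a one-sided semi-distance,
\[
\Delta_X\bigl(\overline{S_\lambda(t,s)D},\A_\lambda(t)\bigr)
=\rho_X\bigl(S_\lambda(t,s)D,\A_\lambda(t)\bigr),
\]
and by (U2) this quantity is smaller than $\epsilon/3$ uniformly in $\lambda\in\Lambda$ once $s$ is negative enough. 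With such an $s$ frozen, Lemma~\ref{Dncts} provides $\Delta_X(\overline{S_\lambda(t,s)D},\overline{S_{\lambda_0}(t,s)D})<\epsilon/3$ for $\lambda$ near $\lambda_0$, and the triangle inequality closes this direction.

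\textbf{Converse direction.} Assume $\Lambda$ compact, (U3), and continuity of $\lambda\mapsto\A_\lambda(t)$. I would construct a sequence $s_n\to-\infty$ inductively by $s_1=s_*(t)$ and $s_{n+1}=\min\{s_*(s_n),\,s_n-1\}$. The built-in inequality $s_{n+1}\le s_*(s_n)$ together with (U3) gives $S_\lambda(s_n,s_{n+1})D_1\subseteq D_1$, and the process property (P2) then yields the nested containment
\[
S_\lambda(t,s_{n+1})D_1 \subseteq S_\lambda(t,s_n)D_1
\qquad\text{for every }\lambda\in\Lambda.
\]
Set $h_n(\lambda)=\rho_X\bigl(\overline{S_\lambda(t,s_n)D_1},\A_\lambda(t)\bigr)$. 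Set monotonicity implies $h_{n+1}(\lambda)\le h_n(\lambda)$, and pullback attraction (A2) applied to the bounded set $D_1$ gives $h_n(\lambda)\to 0$ pointwise. Each $h_n$ is continuous on $\Lambda$ because Lemma~\ref{Dncts} (whose proof only uses boundedness of the input set) makes $\lambda\mapsto\overline{S_\lambda(t,s_n)D_1}$ continuous, the hypothesis makes $\lambda\mapsto\A_\lambda(t)$ continuous, and $\rho_X$ is jointly continuous on ${\it CB}(X)\times{\it CB}(X)$. Lemma~\ref{Dini} then upgrades the monotone pointwise convergence into uniform convergence $\sup_{\lambda\in\Lambda}h_n(\lambda)\to 0$.

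\textbf{From the subsequence to (U2), and the main obstacle.} For any $s\le s_{n+1}$ the same (U3)-chaining shows $S_\lambda(t,s)D_1\subseteq S_\lambda(t,s_n)D_1$, whence $\rho_X(S_\lambda(t,s)D_1,\A_\lambda(t))\le h_n(\lambda)$; taking $\sup_\lambda$ and then letting $s\to-\infty$ delivers (U2) with $D$ replaced by $D_1$. The main obstacle is precisely this non-autonomous monotonisation: (U3) only guarantees forward invariance of $D_1$ \emph{up to} a target time, with threshold $s_*(\tau)$ depending on that target, so a plain parametrisation by $s\to-\infty$ need not be nested. The inductive choice of $s_n$ is engineered so that $s_{n+1}\le s_*(s_n)$ at every step, allowing (U3) and (P2) to be chained together; once chaining is secured, the two directions proceed by standard triangle-inequality and Dini arguments.
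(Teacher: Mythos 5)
Your proof is correct and follows essentially the same route as the paper's: for the forward direction, uniform convergence (from (U2)) of the continuous maps $\lambda\mapsto\overline{S_\lambda(t,s)D}$ given by Lemma~\ref{Dncts}; for the converse, an inductively chosen sequence $s_n\to-\infty$ with $s_{n+1}\le s_*(s_n)$ so that (U3) and (P2) yield nested images, then the Dini lemma and the chaining argument to pass from the subsequence to arbitrary $s$. The only cosmetic differences are your choice $s_{n+1}=\min\{s_*(s_n),\,s_n-1\}$ in place of the paper's $s_*(s_n)-1$, and your application of Lemma~\ref{Dini} to the scalar distance functions $h_n$ rather than to the set-valued maps $\lambda\mapsto\overline{S_\lambda(t,s_n)D_1}$ themselves.
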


\begin{proof}
Let $t\in \R$ be such that (U1) and (U2) hold.
Following the same arguments used to obtain
\eqref{SA0} and \eqref{Alan} in
the proof of Theorem~\ref{thm23},
except with $t$ replacing $n$,
we have for each $\lambda\in \Lambda$ and $s\le t$ that
\begin{equation}\label{uc0}
 \A_\lambda(t)\subseteq S_\lambda(t,s)D  \end{equation}
  and
  \begin{equation}\label{uc1}
\A_\lambda(t)=\lim_{s\to-\infty}\overline{S_\lambda(t,s)D}.
  \end{equation}
By (U2) the convergence in \eqref{uc1} is uniform
in $\lambda\in \Lambda$ as $s\to -\infty$.
Moreover, Lemma~\ref{Dncts} implies for $s\le t$ that the function
$\lambda\mapsto \overline{S_\lambda(t,s)D}$ is continuous in $\lambda$
on $\Lambda$.
Thus, the limit function $\lambda\mapsto\A_\lambda(t)$
is continuous on $\Lambda$.

We now prove the converse.
Assume (U3) and that
$\lambda\mapsto \A_\lambda(t)$ is continuous on $\Lambda$
for some $t\in \R$.
Let $s_0=s_*(t)-1$, $s_1=s_*(s_0)-1$ and  $s_{n+1}=s_*(s_n)-1$ for $n\ge 1$.
Then the sequence $\{s_n\}_{n=0}^\infty$ is strictly decreasing and
$s_n\to -\infty$ as $n\to\infty$.
By (P2) and (U3) we have
\begin{equation}\label{incl5}
	S(t,s_{n+1})D_1=S(t,s_n)S(s_n,s_{n+1})D_1\subseteq S(t,s_n)D_1.
\end{equation}

Replacing $D$ by $D_1$ in \eqref{uc0} and \eqref{uc1} yields
 \begin{equation}\label{incl6}
\A_\lambda(t)\subseteq S_\lambda(t,s)D_1 \qquad\hbox{for all}\qquad
	s\le t
 \end{equation}
and
\begin{equation}\label{limit1}
 \A_\lambda(t)=\lim_{s\to-\infty} \overline{S_\lambda(t,s)D_1}.
\end{equation}
We infer from \eqref{incl5} and \eqref{incl6} that
\begin{equation}\label{mono2}
\Delta_X\big(\overline{S(t,s_{n+1})D_1},\A_\lambda(t)\big)
	\le \Delta_X\big(\overline{S(t,s_{n})D_1},\A_\lambda(t)\big).
\end{equation}
Therefore,
the convergence given in \eqref{limit1}
is monotonic
along the sequence $s=s_n$, and
consequently,
Lemma~\ref{Dini} implies
$\overline{S_\lambda(t,s_n)D_1}$ converges to
$\A_\lambda(t)$ as $n\to\infty$ uniformly in $\lambda\in\Lambda$.
Thus,
\begin{equation}\label{limit2}
 \lim_{n\to \infty} \sup_{\lambda\in \Lambda}
	\rho_X\big(\overline{S_\lambda(t,s_n)D_1},\A_\lambda(t)\big)=0.
\end{equation}

To obtain \eqref{limit2} in the continuous limit as $s\to -\infty$
suppose $s\in(s_{n+2},s_{n+1})$.
Then by definition $s<s_*(s_n)$ so that by (U3) we obtain
  $$\A_\lambda(t)\subseteq S(t,s)D_1=S(t,s_n)S(s_n,s)D_1
	\subseteq S(t,s_n)D_1.$$
Hence,
$$ \rho_X\big(\overline{S_\lambda(t,s)D_1},\A_\lambda(t)\big)
	\le \rho_X\big(\overline{S_\lambda(t,s_n)D_1},\A_\lambda(t)\big)
	\quad\hbox{for every}\quad \lambda\in\Lambda.$$
This and \eqref{limit2} prove
\begin{equation*}
	\lim_{s\to - \infty} \sup_{\lambda\in \Lambda}
	\rho_X\big(\overline{S_\lambda(t,s)D_1},\A_\lambda(t)\big)=0,
\end{equation*}
which is exactly (U2) with $D$ replaced by $D_1$.
\end{proof}

For uniform attractors we work under the standing assumption that
there exists a set~$K$ such that (a) of Theorem~\ref{thm31} holds.
We say that $\AA_\lambda$ is uniformly equi-attracting if
\begin{equation}
\lim_{t\to\infty} \sup_{\lambda\in \Lambda,s\in\R}
	\rho_X\big(S_\lambda(t+s,s)K,\AA_\lambda\big)=0.
\label{ulim}
\end{equation}
In our analysis we further consider the case where any trajectory
starting in $K$ uniformly re-enters $K$ within a certain time $T_0$.
This is characterized by the following condition.
\begin{enumerate}
\item[(U4)] Assume there  exists $T_0\ge 0$ such that
\begin{equation}\label{ulam}
S_\lambda(t+s,s)K \subseteq K
\qquad\mbox{for all}\qquad
t\ge T_0,\quad\lambda\in\Lambda\quad\mbox{and}\quad s\in\R.
\end{equation}
\end{enumerate}
We are now ready to prove our main result on uniform equi-attraction.

\begin{theorem}\label{equiuni}
If $\AA_\lambda$ is uniformly equi-attracting,
then $\AA_\lambda$ is continuous on $\Lambda$.
Conversely, if $\AA_\lambda$ is continuous,
$\Lambda$ is compact and
{\rm (U4)} is satisfied, then
$\AA_\lambda$ is uniformly equi-attracting.
\end{theorem}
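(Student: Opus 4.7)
The plan for the forward direction is to leverage two facts from the proof of Theorem~\ref{thm31}: the inclusion $\AA_\lambda\subseteq\overline{K_\lambda(t)}$ from~\eqref{x2}, where $K_\lambda(t)=\bigcup_{s\in\R}S_\lambda(t+s,s)K$, and the continuity of $\lambda\mapsto\overline{K_\lambda(t)}$ from $\Lambda$ into ${\it CB}(X)$, which was derived there using hypothesis (b) of Theorem~\ref{thm31} and which I treat as an implicit standing hypothesis alongside (a). The key observation is that
\[
\rho_X\bigl(K_\lambda(t),\AA_\lambda\bigr)=\sup_{s\in\R}\rho_X\bigl(S_\lambda(t+s,s)K,\AA_\lambda\bigr),
\]
so uniform equi-attraction is exactly $\sup_{\lambda\in\Lambda}\rho_X(K_\lambda(t),\AA_\lambda)\to 0$ as $t\to\infty$. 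Together with $\AA_\lambda\subseteq\overline{K_\lambda(t)}$ this upgrades to $\sup_{\lambda\in\Lambda}\Delta_X(\overline{K_\lambda(t)},\AA_\lambda)\to 0$, making $\lambda\mapsto\AA_\lambda$ a uniform-in-$\lambda$ limit of continuous ${\it CB}(X)$-valued maps, hence continuous on $\Lambda$.

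For the converse, the strategy is to apply Dini's theorem (Lemma~\ref{Dini}) to the functions $f_n(\lambda)=\Delta_X(\overline{K_\lambda(nT_0)},\AA_\lambda)$ on the compact space $\Lambda$, in direct analogy with the pullback argument in Theorem~\ref{equiback}. Each $f_n$ is continuous in $\lambda$ (using the hypothesised continuity of $\AA_\lambda$ together with the continuity of $\overline{K_\lambda(t)}$) and tends pointwise to $0$ by~\eqref{basiclim}. The crux is the monotonicity condition required by Dini, which I would derive from~(U4) via the cocycle identity: for $t_1\ge T_0$ and $t_2\ge 0$,
\[
S_\lambda((t_1+t_2)+s,s)K=S_\lambda((t_1+t_2)+s,t_1+s)\,S_\lambda(t_1+s,s)K\subseteq S_\lambda((t_1+t_2)+s,t_1+s)K
\]
by~\eqref{ulam}, and reindexing $\eta=t_1+s$ yields $K_\lambda(t_1+t_2)\subseteq K_\lambda(t_2)$. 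Specialising to $t_1=T_0$ and $t_2=nT_0$ produces nested closures $\overline{K_\lambda((n+1)T_0)}\subseteq\overline{K_\lambda(nT_0)}$, which combined with $\AA_\lambda\subseteq\overline{K_\lambda(t)}$ force $f_{n+1}\le f_n$. Dini then delivers $\sup_{\lambda}f_n(\lambda)\to 0$.

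To upgrade convergence along the discrete grid $\{nT_0\}$ to the continuous limit $t\to\infty$, I would apply the same monotonicity once more: for $t\ge nT_0$, writing $t=t_1+t_2$ with $t_1=t-(n-1)T_0\ge T_0$ and $t_2=(n-1)T_0$ gives $K_\lambda(t)\subseteq K_\lambda((n-1)T_0)$, whence $\sup_{\lambda}\rho_X(K_\lambda(t),\AA_\lambda)\to 0$, which is exactly uniform equi-attraction. The main obstacle is precisely this setup of an eventual monotonicity for $\overline{K_\lambda(t)}$ along a cofinal sequence; this is where~(U4) is indispensable, since without some uniform-in-$\lambda$ positive-time invariance of $K$ there is no mechanism to force $K_\lambda(t)$ to decrease and no way to invoke Dini. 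Structurally the converse mirrors Theorem~\ref{equiback}, with the recursively-defined sequence $\{s_n\}$ replaced here by the arithmetic progression $\{nT_0\}$.
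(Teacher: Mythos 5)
Your argument is correct and follows essentially the same route as the paper: the forward direction uses the inclusion \eqref{x2} to upgrade uniform equi-attraction to uniform convergence of the continuous maps $\lambda\mapsto\overline{K_\lambda(t)}$ to $\AA_\lambda$, and the converse uses (U4) plus the cocycle identity to get monotone nesting of $\overline{K_\lambda(\cdot)}$ along an arithmetic grid, invokes Dini, and then fills in the continuous limit by the same monotonicity. The only point to patch is the degenerate case $T_0=0$ permitted by (U4), where your grid $\{nT_0\}$ does not tend to infinity; the paper avoids this by taking $T_*=T_0+1$ and $t_n=nT_*$, and replacing $T_0$ by $\max\{T_0,1\}$ in your argument does the same.
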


\begin{proof}
Under our standing assumption about the existence of $K$,
we have that \eqref{unifcgve}, \eqref{x2} and \eqref{basiclim} hold.
To prove that $\AA_\lambda$ is continuous, recall from
Theorem~\ref{thm31} that
$$\lambda\mapsto \overline{\bigcup_{s\in\R}S_\lambda(t+s,s)K}$$
is continuous.
By \eqref{x2} and \eqref{ulim} the limit
\eqref{basiclim} is uniform in $\lambda$.  In other words,
$$\overline{\bigcup_{s\in\R}S_\lambda(t+s,s)K}
\to\AA_\lambda
\quad\text{as}\quad t\to\infty
\quad\text{uniformly for}\quad\lambda\in\Lambda	.$$
Therefore, $\AA_\lambda$ is continuous at every $\lambda\in\Lambda$.

Conversely, let $T_0$ be in (U4) and set
$T_*=T_0+1\ge 1$. Let $t_n=nT_*$ for all $n\in\N$. Then $t_n\to\infty$ as $n\to\infty$.
For $s\in \R$,
\begin{align*}
 S_\lambda(t_{n+1}+s,s)K
&=S_\lambda(t_n+T_*+s,s)K=S_\lambda(t_n+T_*+s,T_*+s)S_\lambda(T_*+s,s)K\\
&\subseteq S_\lambda(t_n+T_*+s,T_*+s)K\subseteq \bigcup_{s\in\R} S_\lambda(t_n+s,s)K.
\end{align*}
Thus,
\begin{equation} \label{x0}
\overline{\bigcup_{s\in\R} S_\lambda(t_{n+1}+s,s)K}
	\subseteq \overline{\bigcup_{s\in\R} S_\lambda(t_n+s,s)K}.
\end{equation}
The inclusion \eqref{x2} then yields
\begin{equation*}
\Delta_X\left( \overline{\bigcup_{s\in\R} S_\lambda(t_{n+1}+s,s)K},\AA_\lambda\right)\le \Delta_X\left(\overline{\bigcup_{s\in\R} S_\lambda(t_n+s,s)K},\AA_\lambda\right).
\end{equation*}
which is the monotonicity needed for Lemma~\ref{Dini}.
It follows that
the convergence in \eqref{basiclim} taken along the sequence $t=t_n$
is uniform in $\lambda$.  In other words, that
  \begin{equation}\label{basiclim20}
\lim_{n\to\infty} \sup_{\lambda\in\Lambda }\Delta_X
	\bigg(\overline{\bigcup_{s\in\R}S_\lambda(t_n+s,s)K},\AA_\lambda\bigg)=0.
  \end{equation}

To obtain uniformity in the continuous limit as $t\to\infty$ suppose
$t\in(t_{n+1},t_{n+2})$.   Then $t-t_n>T_0$ and
\begin{align*}
S_\lambda(t+s,s) K
&= S_\lambda(t+s,t-t_n+s) S_\lambda(t-t_n+s,s) K\\
&\subseteq S_\lambda(t+s,t-t_n+s)K
=S_\lambda(t_n+z,z)K
\end{align*}
with $z=t-t_n+s$. Thus,
$$
S_\lambda(t+s,s) K \subseteq \bigcup_{z\in\R}S_\lambda(t_n+z,z)K.
$$
Together with \eqref{x2} we obtain that
$$
\AA_\lambda
\subseteq \overline{\bigcup_{s\in\R} S_\lambda(t+s,s) K}\subseteq
\overline{\bigcup_{s\in\R}S_\lambda(t_n+s,s)K}
$$
and therefore
\begin{equation}\label{mono3}
\Delta_X\bigg(\overline{\bigcup_{s\in\R}S_\lambda(t+s,s)K},\AA_\lambda\bigg)
\le
\Delta_X\bigg(\overline{\bigcup_{s\in\R}S_\lambda(t_n+s,s)K},\AA_\lambda\bigg).
\end{equation}

Combining \eqref{mono3} with \eqref{basiclim20} yields
\begin{equation*}
	\lim_{t\to\infty} \sup_{\lambda\in\Lambda}
	\Delta_X\bigg(\overline{\bigcup_{s\in\R}S_\lambda(t+s,s)K},
		\AA_\lambda\bigg)=0
\end{equation*}
and consequently
\begin{equation}\label{basiclim4}
    \lim_{t\to\infty} \sup_{\lambda\in\Lambda} \rho_X\bigg(\bigcup_{s\in\R}
    S_\lambda(t+s,s)K,\AA_\lambda\bigg)=0.
\end{equation}
This implies that $\AA_\lambda$ is uniformly equi-attracting.
\end{proof}

We make the following four remarks.
First, that our results for uniform attractors in Theorem \ref{equiuni} have not been established before in literature.
Second, our result in Theorem \ref{equiback} on everywhere continuity implying the equi-attraction is simpler and less technical than the similar ones in \cite{KP,CLR2009}.
Third, our result requires certain boundedness,  but not any extra compactness for the family of the processes $S_\lambda(\cdot,\cdot)$.
Finally, there are other papers (e.g.\  \cite{KLb}) that
establish the equivalence of equi-attraction and everywhere continuity
for a nonautonomous dynamical system $(\theta,\phi)$ with a cocycle
mapping $\phi$ on $X$ driven by an autonomous dynamical system $\theta$
acting on a base or parameter space $P$. Such considerations are
notably different from ours.

\section{Applications}\label{apps}

In this section we demonstrate the applicability of the abstract
theory developed in Sections~\ref{pullbacksec} and \ref{uniformsec}
to some well-known systems of ordinary and partial differential equations.
We also present some natural examples that are relevant for the
autonomous theory developed in \cite{HOR1} (see also \cite{BPY}).
We begin with the following simple observation about residual
sets.
The proof is elementary but included for the sake of clarity
and completeness.

\begin{lemma}\label{reslem}
  Let $(X,d)$ be a metric space, and suppose that
  \begin{equation}\label{X}
  X=\bigcup_{j=1}^\infty X_j.
  \end{equation}
If $Y_j\subseteq X_j$ is residual in $(X_j,d)$ for all $j\ge 1$,
then the set
  \begin{equation}\label{Y}
  Y=\bigcup_{j=1}^\infty Y_j
  \end{equation}
  is residual in $X$.
\end{lemma}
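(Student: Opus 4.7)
\emph{Plan.} My strategy is to show that $X\setminus Y$ is meager in $X$ (i.e., a countable union of nowhere dense subsets of $X$), from which the residuality of $Y$ in $X$ follows immediately. The work factors into a very easy set-theoretic reduction plus one topological fact about the interaction of subspace and ambient nowhere-denseness.

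First I would establish the containment $X\setminus Y\subseteq\bigcup_{j=1}^\infty(X_j\setminus Y_j)$: indeed, if $x\in X\setminus Y$ then $x\notin Y_k$ for all $k$, and since $X=\bigcup_j X_j$ there is some $j$ with $x\in X_j$, whence $x\in X_j\setminus Y_j$. By hypothesis, for each $j$ we may write $X_j\setminus Y_j=\bigcup_{k=1}^\infty N_{j,k}$, where each $N_{j,k}$ is nowhere dense in $(X_j,d)$. Reordering the countable double union $\bigcup_{j,k}N_{j,k}$, it therefore suffices to prove the auxiliary claim that if $N\subseteq X_j\subseteq X$ is nowhere dense in $(X_j,d)$, then $N$ is nowhere dense in $(X,d)$.

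The proof of this claim is where the only real work lies. I would argue by contradiction: suppose $\overline{N}^X$ contains an open ball $B_X(x,r)$. Since $x\in\overline{N}^X$ one may choose a point $y\in N$ with $d(x,y)<r/2$; then by the triangle inequality $B_X(y,r/2)\subseteq B_X(x,r)\subseteq\overline{N}^X$. Using the standard identity $\overline{N}^{X_j}=\overline{N}^X\cap X_j$ for subspace closures together with $B_{X_j}(y,r/2)=B_X(y,r/2)\cap X_j$, one obtains $B_{X_j}(y,r/2)\subseteq\overline{N}^{X_j}$, so $y$ is an interior point of $\overline{N}^{X_j}$ in $X_j$, contradicting that $N$ is nowhere dense in $X_j$.

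Combining these two steps, $X\setminus Y$ is a subset of a countable union of sets that are nowhere dense in $X$, hence is meager in $X$, so $Y$ is residual in $X$. The main subtlety---and really the only obstacle---is the auxiliary claim: a priori the presumed interior point $x$ of $\overline{N}^X$ need not lie in $X_j$, so one must first slide over to a nearby point of $N\subseteq X_j$ before passing to the subspace topology; this little triangle-inequality trick is what makes the argument go through.
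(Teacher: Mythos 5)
Your proposal is correct and follows essentially the same route as the paper: reduce to the containment $X\setminus Y\subseteq\bigcup_{j}(X_j\setminus Y_j)$ and invoke the fact that a set nowhere dense in a subspace is nowhere dense in the ambient space. The only difference is that you actually prove that last fact (the paper merely states it as an observation), and your triangle-inequality argument for it is valid.
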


\begin{proof}
First, observe that if $X'\subseteq X$ and $Z\subseteq X'$ is
nowhere dense in $X'$ then $Z$ is nowhere dense in $X$.
Second, observe that if $A$ is residual in $X$ and $A\subseteq B$,
then $B$ is residual in $X$.
By hypothesis
$X_j\setminus Y_j={\textstyle\bigcup_{i=1}^\infty} A_{ji}$
where each $A_{ji}$ is nowhere dense in $X_j$.
It follows that
$$X\setminus Y=\bigcup_{j=1}^\infty (X_j\setminus Y)
	=\bigcup_{j=1}^\infty \bigcap_{k=1}^\infty  (X_j\setminus Y_k)
\subseteq \bigcup_{j=1}^\infty   (X_j\setminus Y_j)
	=\bigcup_{j=1}^\infty \bigcup_{i=1}^\infty A_{ji}.$$
By the first observation each $A_{ji}$ is nowhere dense in $X$.
Since
$
	X\setminus
		\bigcup_{j=1}^\infty \bigcup_{i=1}^\infty A_{ji}
	\subseteq Y,
$
the second observation implies that $Y$ is residual in $X$.
\end{proof}

\subsection{The Lorenz system}

The first application of our theory concerns the system of three
ordinary differential equations
introduced by Lorenz in \cite{Lorenz}.
Namely, we consider
\begin{equation}\label{Loz}
\left \{
\begin{aligned}
 x'&=-\sigma x+\sigma y,\\
 y'&=rx-y-xz,\\
 z'&=-bz+xy,
\end{aligned}\right.
\end{equation}
where $\sigma$, $b$ and $r$ are positive constants.
These equations have been widely studied as a model
of deterministic nonperiodic flow.
The standard bifurcation parameter of the Lorenz equations is $r$,
see \cite{CS}, but we will consider continuity of the
global attractor of the autonomous system with respect to
the full parameter set $\lambda=(\sigma,b,r)$.
Since physical measurements and numerical computations
in general employ only approximate values,
then considering perturbations in all three parameters
makes sense from a mathematical point of view.
As an example, we point out that Tucker \cite{Tucker} considered
an open neighborhood of the standard choice of parameters
$\lambda=(10,8/3,28)$
in his work on
the Lorenz equations.

As shown in Doering and Gibbon \cite{DG},
see also Temam \cite{Temam},
for any $\lambda\in(0,\infty)^3$ the solutions to (\ref{Loz}) generate a
semigroup $S_\lambda(t)$ for which there exists a corresponding global
attractor $\A_\lambda$.
Therefore, the requirement (G1) of Theorem~\ref{auto} is met.
The estimates in \cite{DG,Temam} also show that for
any compact subset $\Pi$ of $(0,\infty)^3$ that there is a
a bounded set $D$ such that given any bounded set $B\in\R^3$
there is $T>0$ such that
$$S_\lambda(t)B\subseteq D\qquad\hbox{for all}\qquad
	t\ge T\quad\hbox{and}\quad \lambda\in\Pi.$$
This guarantees that (G2) holds.
Assumption (G3), the continuity of $S_\lambda(t)$
with respect to $\lambda$, can be verified by considering
the equation for the difference of two solutions with different values
of the parameters and using a standard Gronwall-type argument.
Thus, the conditions of Theorem~\ref{auto} are satisfied, as a consequence of which we have the following result.

\begin{theorem}\label{Loz1}
  There is a residual and dense subset $\Lambda_*$ in $(0,\infty)^3$
  such that the function from $(0,\infty)^3\to{\it CB}(\R^3)$ given by
  $\lambda\mapsto\A_\lambda$ is continuous at
  every $\lambda\in\Lambda_*$.
\end{theorem}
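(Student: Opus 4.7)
The obstruction is that $(0,\infty)^3$ is not a complete metric space, so Theorem~\ref{auto} does not apply directly. The plan is to exhaust $(0,\infty)^3$ by the increasing family of compact cubes $\Pi_j:=[1/j,j]^3$ with interiors $U_j:=(1/j,j)^3$, apply Theorem~\ref{auto} on each complete metric space $\Pi_j$, and then glue the resulting residual sets together via Lemma~\ref{reslem} applied to the open cover $(0,\infty)^3=\bigcup_{j=1}^\infty U_j$.

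On each $\Pi_j$ the hypotheses (G1)--(G3) of Theorem~\ref{auto} hold, essentially as noted in the paragraph preceding the theorem. Assumption (G1) is the global well-posedness and existence of an attractor established in Doering--Gibbon~\cite{DG} and Temam~\cite{Temam}, valid at every $\lambda\in(0,\infty)^3$; (G2) follows because the same dissipativity estimates produce a bounded set $D_j$ absorbing every bounded set uniformly in $\lambda\in\Pi_j$, hence containing $\A_\lambda$ for every $\lambda\in\Pi_j$; and (G3) is a standard Gronwall estimate applied to the equation satisfied by the difference of two Lorenz solutions with parameters $\lambda,\lambda'\in\Pi_j$, the coefficients of the resulting differential inequality being uniform over the compact set $\Pi_j$ and over bounded sets of initial data on any finite time interval. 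Theorem~\ref{auto} therefore yields a residual subset $\Lambda_{*,j}\subseteq\Pi_j$ at which $\lambda\mapsto\A_\lambda$ is continuous when viewed as a function on $\Pi_j$.

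The subtle point, which I expect to be the only real obstacle, is that continuity of the map restricted to $\Pi_j$ is a priori weaker than continuity on $(0,\infty)^3$: a point in $\partial\Pi_j$ might be a continuity point when approached from within $\Pi_j$ but fail to be so when approached from outside. I would sidestep this by passing to the interior. Since $U_j$ is open in $\Pi_j$, the intersection $\Lambda_{*,j}\cap U_j$ is residual in $U_j$ (a standard Baire-category fact: a residual set in a Baire space restricts to a residual subset of any non-empty open subset), and since $U_j$ is also open in $(0,\infty)^3$, every $\lambda_0\in\Lambda_{*,j}\cap U_j$ is a genuine continuity point of $\lambda\mapsto\A_\lambda$ regarded as a function on $(0,\infty)^3$. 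Lemma~\ref{reslem} applied with $X_j=U_j$ and $Y_j=\Lambda_{*,j}\cap U_j$ then produces a residual set $\Lambda_*:=\bigcup_{j=1}^\infty Y_j\subseteq(0,\infty)^3$ consisting entirely of such continuity points; density is automatic because $(0,\infty)^3$, being open in $\R^3$, is a Baire space. A naive attempt to apply Lemma~\ref{reslem} directly with $X_j=\Pi_j$ and $Y_j=\Lambda_{*,j}$ would yield a residual set whose elements need not be continuity points on the larger space, which is precisely what the interior trick is designed to avoid.
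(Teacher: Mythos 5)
Your proposal is correct and follows essentially the same route as the paper: exhaust $(0,\infty)^3$ by the compact cubes $[1/j,j]^3$, apply Theorem~\ref{auto} on each, intersect the resulting residual sets with the open interiors $(1/j,j)^3$ so that the continuity points survive as continuity points of the map on the full parameter space, and glue with Lemma~\ref{reslem}. The ``interior trick'' you single out is exactly the step the paper takes by defining $\Lambda_*=\bigcup_{n\ge 2}(\Lambda_{*,n}\cap\Lambda_n^\circ)$.
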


\begin{proof}
For each $n\in\N$ let $\Lambda_n=[n^{-1},n]^3$ and define
$\Phi_n\colon \Lambda_n\mapsto{\it CB}(\R^3)$
by $\Phi_n(\lambda)=\A_\lambda$.
Theorem~\ref{auto} implies
there is a residual set $\Lambda_{*,n}$
in $[n^{-1},n]^3$ such that $\Phi_n$
is continuous at each point in $\Lambda_{*,n}$ with
respect to the Hausdorff metric.
Set
$$
\Lambda_*=\bigcup_{n=2}^\infty \big(\Lambda_{*,n}\cap \Lambda_n^\circ\big)
\qquad\hbox{where}\qquad
\Lambda_n^\circ=(n^{-1},n)^3.
$$
Since, the function
$\Phi\colon (0,\infty)^3\to{\it CB}(\R^3)$
defined by
$\Phi(\lambda)=\A_\lambda$
is continuous at each point in
$\Lambda_{*,n}\cap \Lambda_n^\circ$,
then $\Phi$ is continuous
at each point in $\Lambda_*$.
Since $\Lambda_{*,n}\cap \Lambda_n^\circ$ is residual and dense
in $\Lambda_n^\circ$, then
Lemma~\ref{reslem} implies that
$\Lambda_*$ is a residual subset of $(0,\infty)^3$.
Moreover, since each $\Lambda_{*,n}\cap\Lambda_n^\circ$
is dense in $(n^{-1},n)^3$,
then $\Lambda_*$ is dense in
$(0,\infty)^3$.\end{proof}


As a simple illustration of the non-autonomous theory,
let
$r(t)$ be a fixed   $C^1$-function on $\R$ and $R_0$ a constant such
that
\begin{equation}\label{rcond}
|r(t)|, |r'(t)| \le R_0
	\qquad\mbox{for all}\qquad t\in\R.
\end{equation}
Consider
the family of systems of ordinary differential equations
given by
\begin{equation}\label{nasLoz2}
\left \{
\begin{aligned}
 x'&=-\sigma x+\sigma y,\\
 y'&=r(t)x-y-xz,\\
 z'&=-b z+xy
\end{aligned}\right.
\end{equation}
indexed by the parameter $\lambda=(\sigma,b)$.

Note that the model \eqref{nasLoz2} and assumption \eqref{rcond} are relevant in some climate models, see for example \cite{DS}. In particular, the function $r(t)$ can be a finite sum of  sinusoidal functions.
Making the standard change of variable   $w=z-\sigma -r(t)$ we rewrite
\eqref{nasLoz2} as
\begin{equation}\label{SLt}
\left \{
\begin{aligned}
 x'&=-\sigma x+\sigma y,\\
 y'&=-y-\sigma x-xw,\\
 w'&=-bw+xy+F(t)\end{aligned}
\right.
\end{equation}
with
$$F(t)=-b(\sigma +r(t))-r'(t).
$$
Thanks to condition \eqref{rcond} setting
$F_0=b(\sigma +R_0)+R_0$ yields
\begin{equation}\label{F0}
|F(t)|\le F_0
\qquad\hbox{for all}\qquad t\in\R.
\end{equation}

Similar estimates to those in
\cite{DG} and \cite{Temam}, based on the formulation \eqref{SLt},
show for each $(\sigma,b)\in (0,\infty)^2$ that
the system \eqref{nasLoz2}
generates a process $S_{\sigma,b}(\cdot,\cdot)$, that there exists
pullback attractors $\A_{\sigma,b}(t)$ for every $t\in\R$, and that
the uniform attractor $\AA_{\sigma,b}$ exists.
For the sake of completeness we present explicit estimates here,
which will also be used in the next theorem.

Let $v(t)=(x(t),y(t),z(t))$ be a solution of \eqref{Loz}, and $u(t)=(x(t),y(t),w(t))$.
 Note that $$|v|\le |u|+\sigma +R_0\quad \text{and}\quad|u|\le |v|+\sigma +R_0.$$
We have from \eqref{SLt}, \eqref{F0}
and by Cauchy's inequality that
\begin{equation*}
\frac12 \frac{\d}{\d t}(x^2+y^2+w^2)+\sigma x^2+y^2+bw^2=Fw \le \frac{b}2 w^2+\frac{F_0^2}{2b}.
\end{equation*}
Upon setting $\sigma_0=\min\{1,\sigma ,b/2\}$ it follows that
\begin{equation*}
\frac{\d}{\d t}|u|^2+2\sigma_0 |u|^2\le \frac{F_0^2}{b}.
\end{equation*}
This implies for all $t\ge 0$ that
\begin{equation*}
 |u(t)|^2\le |u(0)|^2 e^{-2\sigma_0 t}+\frac{F_0^2}{2\sigma_0 b};
\end{equation*}
hence
\begin{equation}\label{ut}
 |u(t)|\le |u(0)| e^{-\sigma_0 t}+\frac{F_0}{\sqrt{2\sigma_0  b}}\le (|v(0)|+\sigma +R_0) e^{-\sigma_0 t}+\frac{F_0}{\sqrt{2\sigma_0 b}}.
\end{equation}
Thus,
\begin{equation}\label{vt}
 |v(t)|\le (|v(0)|+\sigma +R_0) e^{-\sigma_0 t}+\frac{F_0}{\sqrt{2\sigma_0 b}}+ (\sigma +R_0).
\end{equation}
By
\eqref{ut} and \eqref{vt},  we have for all $t\ge 0$ that
\begin{equation}\label{uv}
|u(t)|, |v(t)|\le R_1
\quad\hbox{where}\quad
R_1=|v(0)|+2(\sigma +R_0) + \frac{F_0}{\sqrt{2\sigma_0 b}}.
\end{equation}

Next we consider the continuity in $\lambda=(\sigma,b)$.
For $i\in \{ 1,2\}$ let $\lambda_i=(\sigma_i,b_i)\in(0,\infty)^2$ be given
and let $v_i(t)=(x_i(t),y_i(t),z_i(t))$ be the corresponding solution of \eqref{Loz}.  Define
$$
	u_i(t)=(x_i(t),y_i(t),w_i(t))=(x_i(t),y_i(t),z_i(t)-\sigma _i-r(t)).
$$
Let $\bar\lambda
=(\bar \sigma , \bar b)
=\lambda_1-\lambda_2$ and
$\bar u=
(\bar x,\bar y,\bar w)=
u_1-u_2$.
Then \eqref{SLt} implies
\begin{equation*}
\left\{
\begin{aligned}
 {\bar x}'&=-\bar \sigma x_1-\sigma _2 \bar x + \bar \sigma y_1+\sigma _2 \bar y,\\
 {\bar y}'&=-\bar y-\bar x w_1-x_2 \bar w -\bar \sigma  x_1-\sigma _2 \bar x,\\
 {\bar w}'&=-\bar b w_1-b_2 \bar w +\bar xy_1+x_2 \bar y- \bar b(\sigma _1+r(t))-b_2 \bar \sigma .
\end{aligned}\right.
\end{equation*}
It follows that
\begin{align}
 \frac{\d}{\d t}|\bar u|^2 +\sigma _2\bar x^2+\bar y^2+b_2 \bar w^2&=-\bar \sigma (x_1+y_1)\bar x
 -\bar x w_1\bar y-\bar \sigma  x_1 \bar y \notag\\
&\quad -\bar b w_1\bar w+\bar x y_1\bar w -[\bar b(\sigma _1+r(t))+b_2\bar \sigma]\bar w. \label{dubar}
\end{align}

By neglecting $\sigma _2\bar x^2+\bar y^2+b_2 \bar w^2$ on the left-hand side of \eqref{dubar} and using estimate  \eqref{uv} to bound $|x_1|$, $|y_1|$, $|w_1|$ on the right-hand side, we obtain
\begin{align*}
 \frac{\d}{\d t}|\bar u|^2&\le 2R_1 |\bar \lambda| |\bar u|+R_1 |\bar u|^2 +|\bar \lambda| R_1 |\bar u|
 +|\bar \lambda|R_1 |\bar u|+R_1|\bar u|^2  + (|\lambda_1|+R_0+|\lambda_2|) |\bar \lambda||\bar u|\\
&= (4R_1+R_0+|\lambda_1|+|\lambda_2|)  |\bar \lambda| |\bar u|+2R_1 |\bar u|^2\\
&\le 2R_2|\bar u|^2+R_3^2|\bar \lambda |^2,
\end{align*}
where $R_1$ is defined in \eqref{uv} with $v=v_1$,
\begin{equation}\label{R23}
R_2=R_1+ \frac{1}8\quad \text{and}\quad R_3=R_0+4R_1+|\lambda_1|+|\lambda_2|.
\end{equation}
By Gronwall's inequality, we obtain  for $t\ge 0$ that
\begin{equation}\label{ubar}
|\bar u(t)|^2\le  |\bar u(0)|^2e^{2R_2 t} +  R_3^2 t e^{2R_2t} |\bar \lambda |^2.
\end{equation}

Let $\bar v=(\bar x,\bar y,\bar z)=v_1-v_2$.
Therefore, $\bar v=\bar u+(0,0, \sigma_1-\sigma_2)$.
It follows from \eqref{ubar} for $t\ge 0$ that
\begin{align*}\label{vbar}
|\bar v(t)|
&\le |\bar u(t)| +|\bar \lambda|
\le  |\bar u(0)|e^{R_2 t} +  R_3 \sqrt t e^{R_2t} |\bar \lambda |+|\bar\lambda|\\
&\le   (|\bar v(0)|+|\bar\lambda|)e^{R_2 t} +  R_3 \sqrt t e^{R_2t} |\bar \lambda |+|\bar\lambda|.
\end{align*}
Thus,
\begin{equation}\label{vbar}
|\bar v(t)|
\le  e^{R_2 t}\Big\{  |\bar v(0)|+(2+ R_3 \sqrt t )  |\bar \lambda |\Big\}\quad \hbox{for every}\quad t\ge 0.
\end{equation}

We are ready to obtain the continuity of
$\A_{\sigma,b}(t)$ and $\AA_{\sigma,b}$ as functions of $\sigma$ and $b$.

\begin{theorem}\label{Loz3}
  There is a residual and dense subset $\Lambda_*$ in $(0,\infty)^2$ such that
  the functions from $(0,\infty)^2\to{\it CB}(\R^3)$
  defined by
\begin{equation}\label{bothfun}
	(\sigma,b)\mapsto\A_{\sigma,b}(t)
\quad\hbox{for every}\quad t\in\R
\qquad\hbox{and}\qquad
  	(\sigma,b)\mapsto{\mathbb A}_{\sigma,b}
\end{equation}
  are continuous at every point $(\sigma,b)\in \Lambda_*$.
  \end{theorem}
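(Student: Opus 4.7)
The plan is to exhaust $(0,\infty)^2$ by the compact squares $\Lambda_n=[1/n,n]^2$, verify on each $\Lambda_n$ the hypotheses of Theorems~\ref{thm23} and~\ref{thm31}, and then patch the resulting residual continuity sets together with Lemma~\ref{reslem}, following the template already established in the proof of Theorem~\ref{Loz1}. The two key inputs are a uniform dissipative bound yielding a common bounded absorbing set on each $\Lambda_n$, and a continuity estimate in $\lambda$ that is uniform in the starting time $s$.

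For the uniform dissipativity, I would note that on $\Lambda_n$ the constants $\sigma_0=\min\{1,\sigma,b/2\}$ and $F_0=b(\sigma+R_0)+R_0$ appearing in \eqref{ut}--\eqref{vt} admit positive lower and upper bounds depending only on $n$ and $R_0$. Since the energy estimate underlying \eqref{vt} uses only the global bounds \eqref{rcond} on $r$ and $r'$, the same argument applied to \eqref{nasLoz2} started at any $s\in\R$ with $|v(s)|\le M$ produces a bounded set $D_n\subseteq\R^3$ and a time $T=T(n,M)$ such that $S_\lambda(t+s,s)B_{\R^3}(0,M)\subseteq D_n$ for every $\lambda\in\Lambda_n$, every $s\in\R$ and every $t\ge T$. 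This gives (L2) on $\Lambda_n$ and, with $K=\overline{D_n}$ (compact in $\R^3$ because it is closed and bounded), also hypothesis (a) of Theorem~\ref{thm31}. For continuity in $\lambda$, I would apply the estimate \eqref{vbar}, which was derived from \eqref{SLt} using only \eqref{F0} and Gronwall's inequality and therefore transplants to solutions started at any time $s\in\R$. When $\lambda_1,\lambda_2\in\Lambda_n$ and $|v_i(s)|\le M$, the constants $R_2,R_3$ of \eqref{R23} are bounded in terms of $n$, $M$ and $R_0$ alone, and in particular are independent of $s$. Taking $v_1(s)=v_2(s)$ yields continuity of $\lambda\mapsto S_\lambda(t+s,s)x$ uniformly in $x$ on bounded sets, which is (L3); restricting $x$ to $K$ yields hypothesis (b) of Theorem~\ref{thm31}.

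With the hypotheses verified, Theorem~\ref{thm23} gives a residual set $\Lambda_{*,n}^{\mathrm{pb}}\subseteq\Lambda_n$ on which $\lambda\mapsto\A_\lambda(t)$ is continuous for every $t\in\R$, and Theorem~\ref{thm31} gives a residual $\Lambda_{*,n}^{\mathrm{un}}\subseteq\Lambda_n$ on which $\lambda\mapsto\AA_\lambda$ is continuous; the intersection $\Lambda_{*,n}=\Lambda_{*,n}^{\mathrm{pb}}\cap\Lambda_{*,n}^{\mathrm{un}}$ is residual in $\Lambda_n$. To finish, I would set $\Lambda_*=\bigcup_{n\ge 2}(\Lambda_{*,n}\cap\Lambda_n^\circ)$ with $\Lambda_n^\circ=(1/n,n)^2$ and invoke Lemma~\ref{reslem} exactly as in the proof of Theorem~\ref{Loz1} to conclude that $\Lambda_*$ is residual and dense in $(0,\infty)^2$ and that both maps in \eqref{bothfun} are continuous at every point of $\Lambda_*$. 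The main obstacle is the uniform-in-$s$ character of hypothesis (b) of Theorem~\ref{thm31}; this is exactly where \eqref{rcond} pays off, because it is what makes the Gronwall constants $R_2,R_3$ in \eqref{vbar} independent of $s$ and uniformly controllable on each $\Lambda_n$.
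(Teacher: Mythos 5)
Your proposal is correct and follows essentially the same route as the paper: exhaust $(0,\infty)^2$ by the compact squares $[1/n,n]^2$, use the uniformity of the constants in \eqref{vt} and \eqref{vbar} over each square (and over $s\in\R$, thanks to \eqref{rcond}) to verify the hypotheses of Theorems~\ref{thm23} and~\ref{thm31}, and then patch the residual sets together via Lemma~\ref{reslem} as in Theorem~\ref{Loz1}. The only cosmetic difference is that the paper invokes the compactness of closed bounded sets in $\R^3$ to check (L2$'$)/(L3$'$) directly, whereas you verify (L2)/(L3) and supply the compact set $K$ separately for Theorem~\ref{thm31}; these are equivalent here.
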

\begin{proof}
Denote $\lambda=(\sigma,b)$.
Let $0<\delta\le\mu$.
Suppose $\lambda\in [\delta,\mu]^2$
and $|v(0)|\le M$.
Then
$$F_0\le F_*,\qquad \sigma_0\ge \sigma_*\qquad\hbox{and}\qquad
	b\ge 2\sigma_*$$
where
$F_*=\mu(\mu+R_0)+R_0$ and $\sigma_*=\min\{1,\delta/2\}$.
Consequently,
 \begin{equation*}
R_1\le R_{1,*}\qquad\hbox{where}\qquad
	R_{1,*}=M+2(\mu+R_0)+ \frac{F_*}{ 2 \sigma_*}.
\end{equation*}
Similarly denote
$\lambda_i=(\sigma_i,b_i)$.
Suppose $\lambda_i\in[\delta,\mu]^2$
and $|v_i(0)|\le M$ for $i=1,2$.
Then
\begin{equation*}
R_2\le R_{2,*}:=R_{1,*}+\frac{1}8\quad
	\text{and}\quad R_3\le R_{3,*}:=R_0+4R_{1,*}+2\sqrt2\mu.
\end{equation*}
Since $\R^3$ is finite dimensional, then every element of ${\it CB}(\R^3)$
is compact.
Therefore \eqref{vt} with $\Lambda=[\delta,\mu]^2$
may be used to verify requirement (L2$'$) and
condition~(a) of Theorem~\ref{thm31} while \eqref{vbar}
may be used to verify (L3$'$) and (b).

Let $\delta=1/n$ and $\mu=n$ for $n\ge 2$.
By Theorem~\ref{thm23} there is a residual set $\Lambda^{p}_{*,n}$ in $[1/n,n]^2$
such that the function $[1/n,n]^2\mapsto {\it CB}(\R^3)$ given by
$(\sigma,b)\mapsto\A_{\sigma,b}(t)$
is continuous
at each point in $\Lambda^p_{*,n}$ for all $t\in \mathbb R$.
Similarly, by Theorem~\ref{thm31} there is a residual
set $\Lambda^{u}_{*,n}$ in $[1/n,n]^2$ such that
$(\sigma,b)\mapsto{\mathbb A}_{\sigma,b}$ is continuous
at each point in $\Lambda^{u}_{*,n}$.
Let $\Lambda_{*,n}=\Lambda^{p}_{*,n}\cap \Lambda^{u}_{*,n} \cap (1/n,n)^2.$
Then $\Lambda_{*,n}$ is residual and dense in $(1/n,n)^2$ and
the maps given by \eqref{bothfun}
are continuous at every point $(\sigma,b)\in \Lambda_{*,n}$.
Set $
\Lambda_*=\bigcup_{n=2}^\infty \Lambda_{*,n}.
$
Since each $\Lambda_{*,n}$ is dense in $(1/n,n)^2$,
the set $\Lambda_*$ is dense in $(0,\infty)^2$.
Moreover, by Lemma~\ref{reslem}, $\Lambda_*$ is residual in $(0,\infty)^2$.
We finish noting that the functions defined in \eqref{bothfun} are
continuous at every point $(\sigma,\mu)\in\Lambda_*$.
\end{proof}

\subsection{The two-dimensional Navier-Stokes equations}\label{nsexample}
We now turn to the two-dimensional Navier--Stokes equations.
Let $\Omega$ be a bounded, open and connected set in $\R^2$ with
$C^2$ boundary (i.e.\ $\partial\Omega$ can be represented locally
as the graph of a $C^2$ function).
Consider the two-dimensional incompressible Navier--Stokes equations
in $\Omega$ with no-slip Dirichlet boundary conditions
\begin{equation}\label{NSE2D}
 \begin{cases}
  u_t-\nu \Delta u +(u\cdot \nabla) u=-\nabla p+f& \text{on } \Omega\\
 \nabla \cdot u=0 & \text{on } \Omega\\
u=0&\text{on }\partial \Omega,
 \end{cases}
\end{equation}
where $u=u(x,t)$ is the Eulerian velocity field,
$p=p(x,t)$ is the pressure, $\nu>0$
is the kinematic viscosity and $f=f(x,t)$ is the body force.

Define
$$
{\mathscr V}=\{\,v\in [C_c^\infty(\Omega)]^2:\nabla\cdot v=0\,\}
$$
and let $H$ and $V$ be the closures	
of ${\mathscr V}$ in the norms of
$[L^2(\Omega)]^2$ and $[H^1(\Omega)]^2$, respectively.
Note that $H$ is a Hilbert space with inner product
$(\cdot,\cdot)$
and corresponding norm $\|\cdot\|$
inherited from $[L^2(\Omega)]^2$.
Similarly $V$ is a Hilbert space, however, in this case we shall
use the norm $v\mapsto \|\nabla v\|$,
which
is equivalent to the $[H^1(\Omega)]^2$ norm
on $V$
due to the Poincar\'e inequality.
The Rellich--Kondrachov Theorem implies $V$ is compactly
embedded into $H$.  We denote the dual of $V$ by $V^*$
with the pairing $\langle u,v\rangle$ for
$u\in V^*$ and $v\in V$.

Following, for example \cite{TemamNSE},
we write \eqref{NSE2D} in functional form
as the equation
\begin{equation}\label{NSEfn}
 u_t + \nu Au +B(u,u)=P_L f
\end{equation}
in $V^*$
where $v(t)\in V$.
Here $P_L$ is the (Helmholtz--Leray)
orthogonal projection from $[L^2(\Omega)]^2$ onto $H$ and
$A$ and $B$ are the
continuous extensions of the operators
given by
$$Au=P_L(-\Delta u)
\qquad\hbox{and}\qquad
B(u,v)=P_L((u\cdot\nabla )v)
\qquad\hbox{for}\qquad
u,v\in{\mathscr V}$$
such that $A\colon V\mapsto V^*$ and $B\colon V\times V\mapsto V^*$.

Let $\lambda_1>0$ be the first eigenvalue of the Stokes operator.
With this notation
Poincar\'e's inequality may be written as
$\|\nabla v\|^2\ge \lambda_1\|v\|^2$ for all $v\in V$.
For convenience assume $f(t)\in H$ for all time $t\in\R$ so that
$P_L f= f$ in \eqref{NSEfn}.
When $f\in L^\infty(\R,H)$ further define the
Grashof number $G$ as
\begin{equation}\label{grashof}
	G
	=\frac{1}{\lambda_1\nu^2}\big\|f\big\|_{L^\infty(\R,H)}
\quad\hbox{where}\quad
	\big\|f\big\|_{L^\infty(\R,H)}=
	{\rm ess\,sup}\big\{\,\|f(t)\|:t\in\R\,\big\}.
\end{equation}
Note when $f$ is time independent, this definition
reduces to the definition of Grashof number given,
for example, in \cite{Temam}.

As shown in \cite{CLR} and references therein, when $f\in L^\infty(\R,H)$
the system \eqref{NSE2D} generates a
process $S_f(t,s)\colon H\mapsto H$
satisfying Definition \ref{processdef}
defined by $S_f(t,s)u_0=u(t)$
where $u(t)$ is the solution of \eqref{NSE2D} on $[s,\infty)$ with $u(s)=u_0$.
Moreover, a pullback attractor $\A_f(t)$ exists for every $t\in\R$
as does a uniform attractor $\AA_f$.
We therefore have (L1).

To obtain (L2$'$) and (L3$'$) we employ bounds
on individual solutions in terms of the Grashof number
similar to those which show the existence of absorbing sets in
$H$ and $V$ in the case when $f$ is time independent.
Such estimates may be found
in \cite{Temam} pages 109--111 and also
\cite{CFbook,Lady,JCR,SY02,TemamNSE} among others.
As they are simple we include the relevant calculations
in Appendix A.  

\begin{theorem}\label{apriori}
Let $f\in L^\infty(\R,H)$ and $G$ be defined as in \eqref{grashof}.
Suppose $u(t)=S_f(t,s)u_0$ where $u_0\in H$ with $\|u_0\|\le M$.  Then for all $t\ge s$
\begin{equation}\label{intV}
\nu\int_s^t\|\nabla u(\tau)\|^2\,\d\tau \le \|u_0\|^2 +
(t-s) \nu^3\lambda_1 G^2.
\end{equation}
Moreover, there exists a constant $t_0>0$,
depending only on $M$, $\nu$, and $\lambda_1$, such that $t-s\ge t_0$ implies that
\begin{equation}\label{Vbound}
	\|u(t)\|^2\le 2\nu^2 G^2
\qquad\hbox{and}\qquad
	\|\nabla u(t)\|^2\le \rho(G),
\end{equation}
where $\rho(G)$ is an increasing function of $G$ that also depends
on $\nu$ and $\lambda_1$.
\end{theorem}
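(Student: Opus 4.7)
The plan is to carry out the classical 2D Navier--Stokes energy estimates in $H$ and then in $V$, in three stages: first derive \eqref{intV} and the eventual $H$-bound from the standard $L^2$ energy identity; next set up a differential inequality for $\|\nabla u\|^2$ by testing against $Au$; and finally feed it into the uniform Gronwall lemma (see, e.g., Temam \cite{TemamNSE}) to conclude the eventual $V$-bound.

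For the first stage, I would take the $H$ inner product of \eqref{NSEfn} with $u$. Using the orthogonality $\langle B(u,u),u\rangle = 0$, the Cauchy--Schwarz inequality, Poincar\'e's inequality $\|u\|^{2}\le \lambda_1^{-1}\|\nabla u\|^{2}$, and Young's inequality on the forcing term, one obtains
\begin{equation*}
\frac{d}{dt}\|u\|^{2} + \nu\|\nabla u\|^{2} \le \frac{\|f\|^{2}}{\nu\lambda_1} \le \nu^{3}\lambda_1 G^{2}.
\end{equation*}
Integrating in time from $s$ to $t$ is exactly \eqref{intV}. Dropping the dissipation and applying Poincar\'e once more converts this into $\tfrac{d}{dt}\|u\|^{2} + \nu\lambda_1 \|u\|^{2} \le \nu^{3}\lambda_1 G^{2}$, so that Gronwall's inequality yields $\|u(t)\|^{2} \le M^{2} e^{-\nu\lambda_1(t-s)} + \nu^{2} G^{2}$, whence $\|u(t)\|^{2}\le 2\nu^{2} G^{2}$ once $t-s$ is large enough in terms of $M$, $\nu$, and $\lambda_1$.

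For the remaining bound I would take the $H$ inner product of \eqref{NSEfn} with $Au$. The two-dimensional Ladyzhenskaya estimate $|\langle B(u,u),Au\rangle| \le c\|u\|^{1/2}\|\nabla u\|\,\|Au\|^{3/2}$, together with Young's inequality to absorb $\|Au\|^{2}$ into the dissipation and an analogous bound on $(f,Au)$, produces a differential inequality of the shape
\begin{equation*}
\frac{d}{dt}\|\nabla u\|^{2} \le g(t)\|\nabla u\|^{2} + h(t),
\end{equation*}
with $g(t)=c\nu^{-3}\|u(t)\|^{2}\|\nabla u(t)\|^{2}$ and $h(t)$ controlled by $\nu^{-1}\|f\|_{L^{\infty}(\R,H)}^{2}$. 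Once the $H$-bound $\|u\|^{2}\le 2\nu^{2}G^{2}$ is in force, applying \eqref{intV} on a fixed unit interval $[t-1,t]$ controls $\int g$, $\int h$, and $\int \|\nabla u\|^{2}$ in terms of $\nu$, $\lambda_1$, and $G$ only, so the uniform Gronwall lemma delivers $\|\nabla u(t)\|^{2} \le \rho(G)$ for all sufficiently large $t-s$.

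The main obstacle is the trilinear estimate for $\langle B(u,u),Au\rangle$: producing a $g$ that becomes integrable on a fixed unit interval, after using the $H$-bound and \eqref{intV}, relies crucially on the 2D Ladyzhenskaya inequality, and one must carefully track the constants so that the resulting function $\rho$ is an increasing function of $G$ alone (with $\nu$ and $\lambda_1$ regarded as fixed parameters).
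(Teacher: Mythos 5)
Your proposal is correct and follows essentially the same route as the paper's Appendix A: the $L^2$ energy identity with the orthogonality $\langle B(u,u),u\rangle=0$ and Poincar\'e gives \eqref{intV} and, via Gronwall, the absorbing ball $\|u(t)\|^2\le 2\nu^2G^2$; the enstrophy inequality obtained by testing with $Au$ and the 2D trilinear estimate (the paper quotes Temam's inequality $\frac{\d}{\d t}\|\nabla u\|^2+\nu|Au|^2\le \frac{2}{\nu}\|f\|^2+\frac{2c_0}{\nu^3}\|u\|^2\|\nabla u\|^4$, which is exactly your $g$ and $h$) is then closed by the uniform Gronwall lemma using \eqref{intV} on a unit interval. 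The constant-tracking you flag as the main obstacle is handled in the paper exactly as you describe, yielding $\rho(G)=(m_1+m_2)e^{m_3}$ increasing in $G$.
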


Noting that $L^\infty(\R,H)$ is a complete metric space with
respect to the norm described in \eqref{grashof},
we are now ready to obtain the
continuity of $\A_f(t)$ and $\AA_f$ as functions of $f$.

\begin{theorem}\label{thm42}
There is a residual and dense subset $\Lambda_*$ in $L^{\infty}(\R,H)$
such that the maps from
$L^{\infty}(\R,H)\mapsto{\it CB}(H)$ given by
\begin{equation}\label{nsboth}
	f\mapsto\A_f(t)\quad\hbox{for every}\quad t\in\R
	\qquad\hbox{and}\qquad f\mapsto \AA_f
\end{equation}
are continuous at every point $f\in \Lambda_*$.
\end{theorem}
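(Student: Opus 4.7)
The proof mirrors Theorem~\ref{Loz3}: cover $L^\infty(\R, H)$ by a countable family of closed balls, verify the hypotheses of Theorems~\ref{thm23} and~\ref{thm31} on each, and combine via Lemma~\ref{reslem}. For $n\in\N$ let $\Lambda_n = \{f \in L^\infty(\R, H) : \|f\|_{L^\infty(\R, H)} \le n\}$, a closed ball in the Banach space $L^\infty(\R, H)$ and hence a complete metric space in its own right.

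On $\Lambda_n$ the Grashof number is uniformly bounded by $G_n := n/(\nu^2 \lambda_1)$. Theorem~\ref{apriori} then provides a compact set $K_n = \{u \in V : \|\nabla u\|^2 \le \rho(G_n)\}$ (which is compact in $H$ by Rellich--Kondrachov) such that $\A_f(t) \subseteq K_n$ for all $f \in \Lambda_n$ and $t \in \R$ (giving (L2$'$)), and such that for every bounded $B \subseteq H$ there exists $t_B$ with $S_f(t+s,s)B \subseteq K_n$ whenever $t \ge t_B$, uniformly in $s \in \R$ and $f \in \Lambda_n$ (giving condition (a) of Theorem~\ref{thm31}). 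The uniformity in $s$ follows because the estimates in Theorem~\ref{apriori} depend only on the elapsed time $t-s$.

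For continuity in $f$, consider two forcings $f_1, f_2 \in \Lambda_n$ with corresponding solutions $u_i(t) = S_{f_i}(t,s) u_0$ sharing initial datum $u_0 \in H$. Writing $w = u_1 - u_2$ and using the bilinear identity $B(u_1, u_1) - B(u_2, u_2) = B(u_1, w) + B(w, u_2)$ together with the antisymmetry $(B(a, b), b) = 0$, the 2D Ladyzhenskaya estimate, Young's inequality and Gronwall's inequality combined with the energy bound~(\ref{intV}) yield
\begin{equation*}
\|S_{f_1}(t,s) u_0 - S_{f_2}(t,s) u_0\|^2 \le C(t - s, \|u_0\|, n)\, \|f_1 - f_2\|_{L^\infty(\R, H)}^2.
\end{equation*}
Since the bound depends only on $t - s$ and $\|u_0\|$, we obtain (L3$'$) (continuity uniform in $u_0$ on bounded, hence on compact, subsets of $H$) and condition (b) of Theorem~\ref{thm31} (continuity uniform in $s \in \R$ and $x \in K_n$).

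Applying Theorems~\ref{thm23} and~\ref{thm31} to $\Lambda_n$ then yields residual sets $\Lambda_{*,n}^p, \Lambda_{*,n}^u \subseteq \Lambda_n$ at which, respectively, $f \mapsto \A_f(t)$ is continuous for every $t \in \R$ and $f \mapsto \AA_f$ is continuous. Set $\Lambda_{*,n} = \Lambda_{*,n}^p \cap \Lambda_{*,n}^u \cap B_{L^\infty(\R, H)}(0, n)$, which is residual in the open ball $B_{L^\infty(\R, H)}(0, n)$, and let $\Lambda_* = \bigcup_{n=1}^\infty \Lambda_{*,n}$. Since $L^\infty(\R, H) = \bigcup_n B_{L^\infty(\R, H)}(0, n)$, Lemma~\ref{reslem} implies that $\Lambda_*$ is residual and dense in $L^\infty(\R, H)$, and both maps in~(\ref{nsboth}) are continuous at every $f \in \Lambda_*$. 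The main technical obstacle is the Gronwall estimate above: the dependence must be tracked carefully to ensure uniformity both in $s$ (automatic by time-translation invariance of the NSE) and in $u_0$ across bounded subsets, which relies crucially on the integrated dissipation bound~(\ref{intV}) to control $\int_s^t \|\nabla u_2\|^2\,\d\tau$ in terms of $t-s$, $\|u_0\|$, and $n$ alone.
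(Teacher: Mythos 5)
Your proposal is correct and follows essentially the same route as the paper: decompose $L^\infty(\R,H)$ into closed balls on which the Grashof number is uniformly bounded, use the $V$-bound of Theorem~\ref{apriori} to produce a compact absorbing set giving (L2$'$) and condition (a), derive (L3$'$) and condition (b) from the same Gronwall/dissipation estimate on the difference of solutions, and assemble the residual sets via Lemma~\ref{reslem}. The only detail you elide is the justification that $\A_f(t)\subseteq K_n$ (the paper argues this through the $\omega$-limit characterization of the pullback attractor), but this is a routine step and does not affect the argument.
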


\begin{proof}
Given $n>0$ let
$$\Lambda_n=
	\big\{\, f\in L^\infty(\R,H) : G\le n\,\big\},
$$
where  $G$ is the Grashof number defined in \eqref{grashof},
and let $K$ be the ball of radius $\rho(n)$ in $V$.
We remark that
$\Lambda_n$ is a complete metric space and that
$K$ is a compact subset of $H$.  To obtain (L2$'$) it is
enough to show that $\A_f(t)\subset K$ for every $f\in\Lambda_n$ and
$t\in\R$.
In light of Theorems 11.3 and 2.12 of \cite{CLR} we recall that
\begin{equation*}
\A_\nu(t)=\overline{\bigcup\{\omega(B,t):\ B\text{ is a bounded set in } H\}},
\end{equation*}
where, according to  Definition 2.2 in \cite{CLR},
\begin{equation*}
\omega(B,t)=\bigcap_{\sigma\le t}\overline{\bigcup_{s\le \sigma} S_f(t,s)B}.
\end{equation*}

Now, given any bounded $B\subset H$, there is $M$ large enough such
that $u_0\in B$ implies $\|u_0\|\le M$.
From Theorem~\ref{apriori} there is $t_0$ large enough
such that
$$\|\nabla S_f(t,s)u_0\|\le \rho(G)\le \rho(n)
\qquad\hbox{whenever}\qquad t-s\ge t_0.$$
Therefore $S_f(t,s)u_0\in K$ and consequently
$$
	\omega(B,t) \subseteq \overline{\bigcup_{s\le t-t_0} S_f(t,s) B}
		\subseteq K.
$$
It follows that $\A_f(t)\subseteq K$.

To show that (L3$'$) holds, let
$f_1,f_2\in \Lambda_n$ and consider
the solutions
$$u_1(t)=S_{f_1}(t,s)u_0\qquad\hbox{and}\qquad
	u_2(t)=S_{f_2}(t,s)u_0$$
Then $w=u_1-u_2$ satisfies
\begin{equation}\label{diq2}
\frac{\d w}{\d t}+\nu Aw+B(u_1,w)+B(w,u_2)=f\quad\mbox{where}\quad f=f_1-f_2.
\end{equation}
The 2D Ladyzhenskaya inequality
$\|u\|_{L^4}\le c_L\|u\|^{1/2}\|\nabla u\|^{1/2}$ in conjunction with
the H\"older inequality implies that
\begin{equation}\label{Bineq}
 |\langle B(u,v),w\rangle |\le c_B \|u\|^{1/2}\|\nabla u\|^{1/2}\cdot \|\nabla v\|\cdot  \|w\|^{1/2} \|\nabla w\|^{1/2}
\end{equation}
where $c_B=c_L^2$.
Taking the inner product of \eqref{diq2} with $w$ and using \eqref{ortho} and \eqref{Bineq} gives
\begin{align*}
\frac12\frac{\d}{\d t}\|w\|^2+\nu \|\nabla w\|^2
& \le |\langle B(w,u_2),w\rangle|+   \|f\| \|w\| \\
& \le c_B \|w\| \|\nabla w\| \|\nabla u_2\| + \lambda_1^{-1/2} \|f\| \|\nabla w\| \\
&\le \frac\nu4\|\nabla w\|^2+ \frac{c_B^2 \|w\|^2 \|\nabla u_2\|^2}{\nu}+\frac\nu4\|\nabla w\|^2+\frac{\|f\|^2}{\nu\lambda_1}.
\end{align*}
Thus, we have
\begin{equation*}
\frac{\d}{\d t}\|w\|^2\le \frac{2c_B^2 \|\nabla u_2\|^2}{\nu}\|w\|^2 +\frac{2\|f\|^2}{\nu\lambda_1}.
\end{equation*}

Applying Gronwall's inequality we obtain
\begin{equation*}\label{wt}
\|w(t)\|^2\le \frac{2
\|f\|_{L^\infty(\R,H)}^2
}
		{\nu\lambda_1}
		\int_s^t
	\exp \Big( \frac{2c_B^2}{\nu}
	\int_\tau^t \|\nabla u_2(s)\|^2\,\d s \Big)\,\d\tau.
\end{equation*}
Using estimate \eqref{intV} for $u_2$ we have that
\begin{align*}
        \int_s^t
    \exp \Big( &\frac{2c_B^2}{\nu}
    \int_\tau^t \|\nabla u_2(s)\|^2\,\d s \Big)\,\d\tau\\
	&\le (t-s) \exp\bigg\{\frac{2c_B^2}{\nu^2}
	\Big(\|u_0\|^2+(t-s)\frac{\|f_2\|_{L^\infty(\R,H)}^2}
		{\nu\lambda_1}\Big)\bigg\}
	\le C(t,s)
\end{align*}
where
$$
	C(t,s)=(t-s) \exp\Big\{\frac{2c_B^2}{\nu^2}
	\big(M^2+(t-s){\nu^3\lambda_1 n^2}
		\big)\Big\}.
$$
It follows that
$$
\|S_{f_1}(t,s)u_0 - S_{f_2}(t,s)u_0\|^2
	\le \frac{2C(t,s)}{\nu\lambda_1}\|f_1-f_2\|_{L^\infty(\R,H)}^2.
$$
Therefore $S_f(t,s)u_0$ is continuous in $f$ uniformly
for $u_0$ in bounded subsets of $H$.
Thus, (L3) and consequently (L3$'$) holds.

By Theorem~\ref{thm23} there is a residual set $\Lambda^p_{*,n}$
in $\Lambda_n$ such that the maps from $\Lambda_n\to {\it CB}(H)$
defined by $f\mapsto\A_f(t)$ for every $t\in\R$ are continuous
at each point in $\Lambda^p_{*,n}$.
Note that the analysis which proves (L2$'$) and (L3$'$) also shows
that conditions (a) and (b) of Theorem~\ref{thm31} are satisfied.
Therefore, there is also a residual set $\Lambda^u_{*,n}$ such
that the map defined by $f\mapsto\AA_f$ is continuous
at each point in $\Lambda^u_{*,n}$.
Set $\Lambda_{*,n}=\Lambda^p_{*,n}\cap\Lambda^u_{*,n}$ and
$$\Lambda_*
	=\bigcup_{n=1}^\infty( \Lambda_{*,n}\cup\Lambda_n^\circ)
\qquad\hbox{where}\qquad
	\Lambda_n^\circ=
    \big\{\, f\in L^\infty(\R,H)
    : \|f\|_{L^\infty(\R,H)}<\nu^2\lambda_1 n\,\big\}.
$$
Arguments identical to those given at the end of the proof
for Theorem~\ref{Loz1} are now sufficient to finish this proof.
\end{proof}

Before closing, let us draw a few consequences from Theorem~\ref{thm42}.

\begin{corollary}
\label{nssubset}
Let $\Lambda\subseteq L^\infty(\R,H)$ be closed.
There is a residual and dense set $\Lambda_*$ in $\Lambda$
such that the maps from $\Lambda\mapsto{\it CB}(H)$ given by
\eqref{nsboth}
are continuous at every point in~$\Lambda_*$.
\end{corollary}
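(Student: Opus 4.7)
The plan is to adapt the proof of Theorem~\ref{thm42} essentially verbatim, restricting every construction to the closed subset $\Lambda$. Since $\Lambda$ is closed in the complete metric space $L^\infty(\R,H)$, it is itself a complete metric space under the induced metric, and for each $n\in\N$ the set
$$\Lambda_n=\{\,f\in\Lambda:G(f)\le n\,\},$$
with $G$ the Grashof number from \eqref{grashof}, is closed in $\Lambda$ and hence a complete metric space in its own right.

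The key observation is that every estimate used in the proof of Theorem~\ref{thm42}---both the absorbing bounds from Theorem~\ref{apriori} and the continuity estimate for $S_f(t,s)u_0$ with respect to $f$---depends only on the Grashof bound of the individual $f$, not on any feature of the ambient function space. Consequently the same ball $K$ of radius $\rho(n)$ in $V$ witnesses (L1), (L2$'$), (L3$'$) and conditions (a), (b) of Theorem~\ref{thm31} for the family $\{S_f\}_{f\in\Lambda_n}$. Applying Theorems~\ref{thm23} and~\ref{thm31} to this restricted family then produces residual subsets $\Lambda^p_{*,n},\Lambda^u_{*,n}\subseteq\Lambda_n$ on which $f\mapsto\A_f(t)$ (for every $t\in\R$) and $f\mapsto\AA_f$ are respectively continuous, and I take $\Lambda_{*,n}=\Lambda^p_{*,n}\cap\Lambda^u_{*,n}$, which is still residual in $\Lambda_n$.

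To assemble the global residual set, I introduce the relatively open piece
$$\Lambda_n^\circ=\{\,f\in\Lambda:\|f\|_{L^\infty(\R,H)}<\nu^2\lambda_1 n\,\},$$
which satisfies $\Lambda_n^\circ\subseteq\Lambda_n$ and $\Lambda=\bigcup_{n\ge 1}\Lambda_n^\circ$. Because $\Lambda_{*,n}$ is residual in $\Lambda_n$ and $\Lambda_n^\circ$ is open in $\Lambda_n$, the intersection $\Lambda_{*,n}\cap\Lambda_n^\circ$ is residual in $\Lambda_n^\circ$. Setting $\Lambda_*=\bigcup_{n=1}^\infty(\Lambda_{*,n}\cap\Lambda_n^\circ)$, Lemma~\ref{reslem} applied to the cover $\Lambda=\bigcup_n\Lambda_n^\circ$ yields that $\Lambda_*$ is residual---and, by completeness of $\Lambda$, dense---in $\Lambda$. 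At any $f\in\Lambda_*$, some $\Lambda_n^\circ$ is a relative open neighborhood of $f$ contained in $\Lambda_n$, so continuity of the maps in \eqref{nsboth} on $\Lambda$ at $f$ reduces to the already-established continuity of their restrictions to $\Lambda_n$. There is no serious obstacle here: once one checks---trivially---that the a priori bounds of Theorem~\ref{thm42} depend only on each individual $f\in\Lambda$ and not on the surrounding space, the entire argument is a transcription of that proof with $\Lambda_n$ reinterpreted as a closed subset of $\Lambda$ rather than of $L^\infty(\R,H)$.
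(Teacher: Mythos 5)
Your proposal is correct and follows essentially the same route as the paper: restrict to the closed (hence complete) sublevel sets $\Lambda_n\subseteq\Lambda$ of the Grashof number, note that all the a priori and continuity estimates from Theorem~\ref{thm42} are unaffected by shrinking the collection of forces, apply Theorems~\ref{thm23} and~\ref{thm31} on each $\Lambda_n$, and patch the resulting residual sets together via the relatively open cover $\{\Lambda_n^\circ\}$ and Lemma~\ref{reslem}. The only difference is that you spell out the patching argument (which the paper delegates to the end of the proofs of Theorems~\ref{Loz1} and~\ref{thm42}) in full detail, and in doing so you correctly use $\Lambda_{*,n}\cap\Lambda_n^\circ$ where the paper's displayed formula in Theorem~\ref{thm42} contains an evident typo ($\cup$ for $\cap$).
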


\begin{proof}
While it is not, in general, true that a set which is
residual in $L^\infty(\R,H)$ is necessarily residual in $\Lambda$,
we can argue as follows.
First observe that all estimates used in the proof of
Theorem~\ref{nssubset} also hold when considering a smaller
collection of forces.
Since a closed subset of a complete metric space is also
complete, Theorems~\ref{thm23} and \ref{thm31}
apply equally well to the sets $\Lambda_n$ given by
$$\Lambda_n=
	\big\{\, f\in \Lambda : \|f\|_{L^\infty(\R,H)}\le \nu^2\lambda_1 n \,\big\}.
$$
We therefore obtain a $\Lambda_*$ residual in $\Lambda$ that
satisfies the desired continuity conditions.
\end{proof}

Now consider the autonomous case in which the
two-dimensional incompressible Navier--Stokes
equations are forced by a time-independent function $f\in H$.
Although it is possible to apply
Theorem~\ref{auto} using the same analysis as before
to show that (G1), (G2) and (G3) hold,
we instead apply Corollary~\ref{nssubset} to obtain the following result.

\begin{corollary}
\label{autons}
There is a residual and dense set $\Lambda_*$ in $H$ such that
the map from $H\mapsto{\it CB}(H)$ given by
$f\mapsto\A_f$ is continuous at every point $f\in\Lambda_*$.
\end{corollary}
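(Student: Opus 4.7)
The plan is to realize $H$ as a closed subspace of $L^\infty(\R,H)$ via the isometric embedding $\iota\colon H\to L^\infty(\R,H)$ sending $f\in H$ to the constant function $\iota(f)(t)\equiv f$, and then transfer the conclusion of Corollary~\ref{nssubset} back to $H$. The key observation is that $\|\iota(f)\|_{L^\infty(\R,H)}=\|f\|_H$, so $\iota$ is an isometric (in particular homeomorphic) embedding, and $\Lambda:=\iota(H)$ is a closed subspace of $L^\infty(\R,H)$, hence itself a complete metric space.

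First I would verify that for each time-independent $f\in H$, the process $S_{\iota(f)}(t,s)$ defined by the excerpt coincides with the autonomous semigroup $S_f(t-s)$ generated by \eqref{NSE2D} with constant forcing, so that $S_{\iota(f)}(t,s)$ depends only on $t-s$. From this, uniqueness of the pullback attractor (via condition (A3)) forces $\A_{\iota(f)}(t)$ to be independent of $t$, and one checks directly from the definitions that this time-independent set is precisely the autonomous global attractor $\A_f$; in particular it lies in the compact set $K$ constructed in the proof of Theorem~\ref{thm42}, so Corollary~\ref{nssubset} applies to $\Lambda$.

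Next, applying Corollary~\ref{nssubset} to the closed set $\Lambda=\iota(H)\subseteq L^\infty(\R,H)$ yields a residual and dense set $\widetilde\Lambda_*\subseteq\Lambda$ such that $g\mapsto\A_g(0)$ is continuous from $\Lambda$ into ${\it CB}(H)$ at each $g\in\widetilde\Lambda_*$. Setting $\Lambda_*=\iota^{-1}(\widetilde\Lambda_*)\subseteq H$ and using that $\iota$ is a homeomorphism onto $\Lambda$, the set $\Lambda_*$ is residual and dense in $H$, and the map $f\mapsto\A_f=\A_{\iota(f)}(0)$ is continuous at every $f\in\Lambda_*$.

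The only genuinely nontrivial step is the identification of the non-autonomous pullback attractor of the constant-in-time process with the autonomous global attractor, which relies on the minimality property (A3) together with the time-translation invariance of $S_{\iota(f)}(t,s)$; everything else is essentially formal, just pulling back a residual set through an isometric embedding. No new Navier--Stokes estimates are required beyond those already invoked in the proof of Theorem~\ref{thm42}.
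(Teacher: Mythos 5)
Your proposal is correct and follows essentially the same route as the paper: identify the time-independent forces with a closed subset of $L^\infty(\R,H)$, apply Corollary~\ref{nssubset} there, and transfer the residual set back to $H$. The only difference is that you argue the identification $\A_{\iota(f)}(t)=\A_f$ from time-translation invariance and the minimality property (A3), whereas the paper simply cites Lemma~1.19 of \cite{CLR} for this standard fact; your sketch of that step is sound.
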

\begin{proof}
Since the set of time-independent forcing
functions may be viewed as a closed subset of $L^\infty(\R,H)$
then there is a residual set $\Lambda_*$ in $H$.
Since the global attractor $\A_f$ in the autonomous case is identical
with the pullback attractor $\A_f(t)$ for all $t\in\R$ when $f$ is
time independent (Lemma 1.19 in \cite{CLR}), we may immediately apply Corollary~\ref{nssubset} to
obtain the desired result.
\end{proof}

We close with an example in which we fix the forcing $f=f_0$
where $f_0\in L^\infty(\R,H)$ and consider the family of attractors
parameterized by viscosity $\nu$.

\begin{corollary}
\label{nsvisc}
There is a residual and dense set $\Lambda_*$ in $(0,\infty)$ such that
the maps from $(0,\infty)\mapsto{\it CB}(H)$ given by
\begin{equation}\label{viscmaps}
\nu\mapsto\A_\nu(t)\quad\hbox{for every}\quad t\in\R
\qquad\hbox{and}\qquad \nu\to\AA_\nu
\end{equation}
are continuous at every point $\nu\in\Lambda_*$.
\end{corollary}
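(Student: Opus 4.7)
The plan is to mimic Theorem \ref{thm42} (and Theorems \ref{Loz3}, \ref{Loz1}) by parameterizing the Navier--Stokes process directly by $\nu \in (0,\infty)$ with the forcing held fixed at $f_0$, verifying the hypotheses of Theorems \ref{thm23} and \ref{thm31} on each compact subinterval $[1/n,n]$, and then assembling the resulting residual sets via Lemma \ref{reslem}.

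Fix $n \ge 2$ and restrict $\nu$ to $[1/n,n]$. On this interval the Grashof number $G = \|f_0\|_{L^\infty(\R,H)}/(\nu^2\lambda_1)$ is uniformly bounded by $G_* = n^2 \|f_0\|_{L^\infty(\R,H)}/\lambda_1$, so Theorem \ref{apriori} supplies a single $V$-ball $K$ (compact in $H$ by Rellich--Kondrachov) that absorbs every trajectory starting from a given bounded set in $H$, uniformly in $\nu \in [1/n,n]$ and $s \in \R$. Arguing as in the proof of Theorem \ref{thm42}, this gives (L1), (L2$'$) and condition (a) of Theorem \ref{thm31} for the viscosity-parameterized family.

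The crux is establishing (L3$'$) and condition (b) of Theorem \ref{thm31}: continuity of $S_\nu(t,s)u_0$ in $\nu$, uniform for $u_0$ in bounded subsets of $H$. For $\nu_1,\nu_2 \in [1/n,n]$ and $u_i(t)=S_{\nu_i}(t,s)u_0$, the difference $w=u_1-u_2$ satisfies
\begin{equation*}
w_t - \nu_1 \Delta w + B(u_1,w) + B(w,u_2) = (\nu_1-\nu_2)\Delta u_2,
\end{equation*}
with $w(s)=0$. Taking the $H$-inner product with $w$, using orthogonality of $\langle B(u_1,w),w\rangle$, the Ladyzhenskaya-type bound \eqref{Bineq} on $\langle B(w,u_2),w\rangle$, and controlling the new forcing via $|(\nabla u_2,\nabla w)|\le \|\nabla u_2\|\|\nabla w\|$, after absorbing two copies of $(\nu_1/4)\|\nabla w\|^2$ into the dissipation I would arrive at
\begin{equation*}
\frac{\d}{\d t}\|w\|^2 \le \frac{2c_B^2}{\nu_1}\|\nabla u_2\|^2 \|w\|^2 + \frac{2|\nu_1-\nu_2|^2}{\nu_1}\|\nabla u_2\|^2.
\end{equation*}
Gronwall's inequality combined with the $L^2_t V$-bound \eqref{intV} applied to $u_2$ then yields $\|w(t)\|^2 \le C(t,s,n,M)\,|\nu_1-\nu_2|^2$ whenever $\|u_0\|\le M$, which is exactly the required continuity.

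With every hypothesis verified on $[1/n,n]$, Theorems \ref{thm23} and \ref{thm31} produce residual sets $\Lambda^p_{*,n}$ and $\Lambda^u_{*,n}$ in $[1/n,n]$ at which the two maps in \eqref{viscmaps} are respectively continuous. Setting $\Lambda_{*,n}=\Lambda^p_{*,n}\cap\Lambda^u_{*,n}\cap(1/n,n)$ and $\Lambda_*=\bigcup_{n\ge 2}\Lambda_{*,n}$, an application of Lemma \ref{reslem} identical to the closing argument in Theorem \ref{Loz1} shows $\Lambda_*$ is residual and dense in $(0,\infty)$ and the maps in \eqref{viscmaps} are continuous there. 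The main obstacle I anticipate is the new $(\nu_1-\nu_2)\Delta u_2$ term, which is not bounded in $H$ along trajectories; but the $L^2_t V$-regularity provided by Theorem \ref{apriori} is precisely what is needed to absorb it after Cauchy--Schwarz and close the Gronwall loop.
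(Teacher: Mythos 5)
Your proposal is correct, but it takes a genuinely different route from the paper. The paper does not re-verify the hypotheses of Theorems~\ref{thm23} and~\ref{thm31} for the viscosity-parameterized family; instead it rescales via $v=\nu^{-1}u$, $\tau=\nu t$, which turns \eqref{NSEfn} into a unit-viscosity equation forced by $\nu^{-2}f_0$, and then applies Corollary~\ref{nssubset} to the closed ray $\Lambda'=\{cf_0: c\ge 0\}\subseteq L^\infty(\R,H)$, pulling the residual set back through the homeomorphism $\nu\mapsto\nu^{-2}f_0$ and converting attractors via $\A_\nu(t)=\nu\B_{\xi(\nu)}(\nu t)$, $\AA_\nu=\nu\BB_{\xi(\nu)}$. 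The price of that shortcut is the rescaled time argument $\nu t$: to conclude continuity of $\nu\mapsto\B_{\xi(\nu)}(\nu t)$ the paper must establish joint continuity in $(g,\tau)$, which is why it proves Proposition~\ref{tcont} (uniform local H\"older continuity of $t\mapsto\A_{\nu,f}(t)$, requiring the second-order $|Au|^2$ estimates). Your direct approach avoids Proposition~\ref{tcont} entirely; its only new ingredient is the energy estimate for $w=u_1-u_2$ with the extra term $(\nu_1-\nu_2)Au_2$, and your treatment of it is sound: testing with $w$ turns it into $-(\nu_1-\nu_2)(\nabla u_2,\nabla w)$, the two $\tfrac{\nu_1}{4}\|\nabla w\|^2$ absorptions are legitimate, and both the Gronwall exponent and the inhomogeneous term are controlled by \eqref{intV} applied to $u_2$, uniformly for $\nu_1,\nu_2\in[1/n,n]$ and $\|u_0\|\le M$ (note $\nu_2^3\lambda_1 G_2^2=\|f_0\|^2/(\lambda_1\nu_2)$ is bounded on $[1/n,n]$), with constants depending only on $t-s$, which is what condition (b) of Theorem~\ref{thm31} needs. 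The uniform compact absorbing set follows as in Proposition~\ref{tcont} by taking $\sup_{\nu\in[1/n,n]}\rho(G)$. In short: your argument is more self-contained and elementary for this corollary; the paper's argument recycles Corollary~\ref{nssubset} and yields the time-regularity of the pullback attractors as a by-product of independent interest.
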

\begin{proof}
The change of variables $v=\nu^{-1} u$ and $\tau=\nu t$ transforms
\eqref{NSEfn} into
\begin{equation}\label{novisc}
	\frac{\d v}{\d \tau} + A v + B(v,v)= \nu^{-2} f_0.
\end{equation}

For $g\in L^\infty(\R,H)$, denote by $\B_{g}(\tau)$ the pullback attractor of \eqref{novisc} with the right-hand side being replaced by $g$,
and by $\BB_{g}$ the uniform attractor.

Let $\Lambda'=\{\, c f_0 : c \in [0,\infty)\,\}.$
Since $\Lambda'$ is a closed subset of
$L^\infty(\R,H)$ then Corollary~\ref{nssubset}
implies there exists a residual (and so dense) set $\Lambda'_*$ in $\Lambda'$
such that the maps from $\Lambda'\mapsto {\it CB}(H)$ given by
\begin{equation}\label{gB}
	Q_1: g\mapsto\B_g(\tau)\quad\hbox{for every}\quad \tau\in\R
	\qquad\hbox{and}\qquad Q_2: g\mapsto \BB_g
\end{equation}
are continuous at each point in $\Lambda'_*$.

Define
$\Lambda_*=\{\, 1/\sqrt c : cf_0\in\Lambda'_*\hbox{ and }c>0\,\}$.
Since the mapping $\xi:(0,\infty)\to \Lambda'\setminus\{0\}$ given by
$\xi(\nu)=\nu^{-2} f_0$ is a continuous bijection, then $\Lambda_*=\xi^{-1}(\Lambda_*'\setminus\{0\})$ is
residual and dense in $(0,\infty)$.  Then, by \eqref{gB}, the maps from
$(0,\infty)\to{\it CB}(H)$ given by
\begin{equation}\label{Bcont}
 	Q_3= Q_1 \circ \xi:\nu\mapsto\B_{\xi(\nu)}(\tau)\quad\hbox{for every}\quad \tau\in\R
	\qquad\hbox{and}\qquad Q_4=P_2\circ \xi: \nu\mapsto \BB_{\xi(\nu)}
\end{equation}
are also continuous at each point in $\Lambda_*$.
Note that
\begin{equation}\label{ABconvert}
	\A_\nu(t)= \nu\B_{\xi(\nu)}(\nu t)
\qquad\hbox{and}\qquad
	\AA_\nu= \nu\BB_{\xi(\nu)}. 
\end{equation}

Since the map 
\begin{equation}\label{nuK}
 (\nu,K)\in (0,\infty)\times CB(H)\mapsto\nu K \text{ is continuous,}
\end{equation}
the continuity of $Q_4$ in \eqref{Bcont} and the second identity in \eqref{ABconvert} imply that $\nu\mapsto \AA_\nu$ is continuous at each point in $\Lambda_*$. 

\textit{Claim.}  Given $\tau_0\in \R$. If  $g\in L^\infty(\R,H)\mapsto \B_{g}(\tau_0)$ is continuous at $g_0$, then the map $(g,\tau)\mapsto \B_{g}(\tau)$ is continuous at $(g_0,\tau_0)$.

This \textit{Claim} and the continuity of $Q_3$ in \eqref{Bcont} imply that the map $\nu\mapsto \B_{\xi(\nu)}(\nu t)$ is continuous at each point in $\Lambda_*$, for any $t\in\R$. Combining this fact with the first identity in \eqref{ABconvert} and property \eqref{nuK} proves that the map  $\nu\mapsto \A_{\nu}(t)$ is continuous at each point in $\Lambda_*$, for any $t\in\R$.

It remains to prove \textit{Claim}. 
By the triangle inequality, 
\begin{equation}\label{Bht}
 \Delta_H (\B_{g}(\tau),\B_{g_0}(\tau_0))\le  \Delta_H (\B_{g}(\tau),\B_{g}(\tau_0))+ \Delta_H (\B_{g}(\tau_0),\B_{g_0}(\tau_0)). 
\end{equation}
Since $g\to g_0$, we have $g$ belongs to a bounded subset of $L^\infty(\R,H)$.
Then the first term on the right-hand side of \eqref{Bht} goes to zero as $(g,\tau)\to(g_0,\tau_0)$ by the virtue of Proposition \ref{tcont} below.
The second term on right-hand side of \eqref{Bht} goes to zero by the assumption $g\mapsto\B_{g}(\tau_0)$ is continuous at $g_0$.
Thus, the map $(g,\tau)\mapsto \B_{g}(\tau)$ is continuous at $(g_0,\tau_0)$. This finishes the proof of \textit{Claim} and also the proof of this corollary.
\end{proof}

The following result is about the continuity in time of pullback attractors for the Navier--Stokes equations \eqref{NSE2D}. 
It has its own merit, and is stronger than what is needed for the proof in Corollary \ref{nsvisc}.

\begin{proposition}\label{tcont}
 Let $\A_{\nu,f}(t)$ be pullback attractors of the Navier--Stokes equations \eqref{NSE2D} with $\nu\in(0,\infty)$ and $f\in L^\infty(\R,H)$.
 Then the map $t\mapsto \A_{\nu,f}(t)$ is locally H\"older continuous on $\R$, uniformly in $(\nu,f)$ for $\nu$ belonging to any compact subsets of $(0,\infty)$ and $f$ belonging to any bounded subsets of $L^\infty(\R,H)$.
\end{proposition}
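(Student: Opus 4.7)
The plan is to reduce everything to a short-time estimate on trajectories starting in the attractor, using the invariance $\A_{\nu,f}(t_2)=S_{\nu,f}(t_2,t_1)\A_{\nu,f}(t_1)$ for $t_1\le t_2$. Concretely, if I can show that there is a constant $C$, depending only on a compact set $\Pi\subseteq(0,\infty)$ for $\nu$ and a bounded set $\mathcal F\subseteq L^\infty(\R,H)$ for $f$, such that
$$
\|S_{\nu,f}(t_2,t_1)u-u\|\le C(t_2-t_1)^{1/2}
\qquad\text{for every }u\in\A_{\nu,f}(t_1),
$$
then taking the sup over $u\in\A_{\nu,f}(t_1)$ controls $\rho_H(\A_{\nu,f}(t_1),\A_{\nu,f}(t_2))$ via the choice $v=S_{\nu,f}(t_2,t_1)u\in\A_{\nu,f}(t_2)$, and controls $\rho_H(\A_{\nu,f}(t_2),\A_{\nu,f}(t_1))$ by writing each $v\in\A_{\nu,f}(t_2)$ as $v=S_{\nu,f}(t_2,t_1)u$. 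Both semi-distances are then bounded by $C(t_2-t_1)^{1/2}$, giving local (in fact, global) $\tfrac12$-Hölder continuity in $t$.

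Next I would establish a uniform $V$-bound on the attractor. By Theorem~\ref{apriori}, every trajectory starting in a bounded set of $H$ enters the $V$-ball of radius $\rho(G)$ after a fixed time $t_0$, and by invariance of the pullback attractor every $u\in\A_{\nu,f}(t)$ can be written as $u=S_{\nu,f}(t,t-t_0-1)u_s$ for some $u_s\in\A_{\nu,f}(t-t_0-1)\subseteq D$, where $D$ is an absorbing set in $H$. Consequently $\|\nabla u\|^2\le\rho(G)$ for every $u\in\A_{\nu,f}(t)$, uniformly in $t$. Since $G=\|f\|_{L^\infty(\R,H)}/(\lambda_1\nu^2)$ is bounded whenever $\nu\in\Pi$ and $f\in\mathcal F$, there is $R=R(\Pi,\mathcal F)$ with $\|u\|_V\le R$ for every such $u$.

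The Hölder estimate now comes from the $V^*$-regularity of the time derivative combined with the interpolation inequality $\|w\|^2\le\|w\|_{V^*}\|\nabla w\|$ valid on $V$. For $u(t)=S_{\nu,f}(t,t_1)u_0$ with $u_0\in\A_{\nu,f}(t_1)$, invariance keeps $u(t)$ in the attractor, hence $\|u(t)\|_V\le R$ for all $t\ge t_1$. From \eqref{NSEfn} I would bound each term of $u_t=f-\nu Au-B(u,u)$ in $V^*$: $\|Au\|_{V^*}\le\|\nabla u\|\le R$, the 2D estimate $\|B(u,u)\|_{V^*}\le c\|u\|\,\|\nabla u\|\le cR^2\lambda_1^{-1/2}$, and $\|f\|_{V^*}\le\lambda_1^{-1/2}\|f\|_{L^\infty(\R,H)}$. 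This yields a constant $K=K(\Pi,\mathcal F)$ with $\|u_t\|_{V^*}\le K$, so
$$
\|u(t_2)-u_0\|_{V^*}\le K(t_2-t_1)
\qquad\text{and}\qquad
\|u(t_2)-u_0\|_V\le 2R,
$$
and interpolation gives $\|u(t_2)-u_0\|\le\sqrt{2KR}\,(t_2-t_1)^{1/2}$, as required.

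The main obstacle is tracking uniformity. The rest is routine $V^*$ bookkeeping, but I must check that the bound $R$ on $\A_{\nu,f}(t)$ in $V$ depends only on $\Pi$ and $\mathcal F$ and not on $t$ or on the particular $(\nu,f)$; this is where compactness of $\Pi$ away from $0$ is essential (to keep $G$, $\nu^{-1}$ and $\nu$ simultaneously under control in the explicit dependence of $\rho(G)$ on $\nu,\lambda_1,G$ in Theorem~\ref{apriori}), and where boundedness of $\mathcal F$ in $L^\infty(\R,H)$ ensures a uniform ceiling on $G$. Once that uniformity is in hand, $K$ and $R$ are genuine constants in the estimate, and the $\tfrac12$-Hölder bound passes at once to the Hausdorff metric via the two semi-distance computations described above.
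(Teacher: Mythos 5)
Your argument is correct, and it takes a genuinely different route from the paper's. The paper does not invoke the invariance of the attractor at all: it represents $\A_{\nu,f}(t)$ as the Hausdorff limit \eqref{Anflim} of the sets $\overline{S_{\nu,f}(t,s)D}$ for a fixed $V$-ball $D$, proves the H\"older estimate for $\Delta_H\big(S_{\nu,f}(t_2,s)D,S_{\nu,f}(t_1,s)D\big)$, and passes to the limit $s\to-\infty$; the trajectory estimate itself is carried out entirely in the $H$-norm, writing $u(t_2)-u(t_1)=\int_{t_1}^{t_2}\big(-\nu Au-B(u,u)+f\big)\,\d\tau$ and controlling $\nu\int_{t_1}^{t_2}|Au|^2\,\d\tau$ by integrating the enstrophy inequality \eqref{dd}, which produces terms of orders $(t_2-t_1)^{1/2}$, $(t_2-t_1)^{3/4}$ and $(t_2-t_1)$ and hence only a local H\"older bound (the restriction $t_2-t_1<1$). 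You instead exploit invariance to confine the whole trajectory to the attractor, bound $u_t$ in $V^*$ using only the uniform $V$-bound $R$ on $\bigcup_t\A_{\nu,f}(t)$ (so that $\|Au\|_{V^*}\le\|\nabla u\|$, $\|B(u,u)\|_{V^*}\le c\|u\|\,\|\nabla u\|$, $\|f\|_{V^*}\le\lambda_1^{-1/2}\|f\|$), and recover the $H$-estimate by the interpolation $\|w\|^2\le\|w\|_{V^*}\|\nabla w\|$. This buys you two things: you never need the second-order estimate \eqref{dd} or the $L^2_t$ bound on $Au$, and your constant is uniform in $t_1\le t_2$, so the $\tfrac12$-H\"older bound is in fact global rather than merely local. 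What the paper's route buys in exchange is independence from the invariance identity: it only uses the attracting property through \eqref{Anflim}, which is the same mechanism already set up in the proof of Theorem~\ref{thm42}. Both approaches rest on the same uniform $V$-bound from Theorem~\ref{apriori}, and your tracking of the dependence of $R$ and $K$ on the compact set for $\nu$ and the bounded set for $f$ is exactly the uniformity the statement requires.
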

\begin{proof}
Denote by $S_{\nu,f}(t,s)$ the process generated by solutions of the Navier--Stokes equations  \eqref{NSE2D}.
Let $R_0>0$ and $\varepsilon_0\in (0,1)$. Define
$\bar G =R_0/(\lambda_1\varepsilon_0^2)$ and 
$$\bar \rho =\max\{\rho(\bar G):\nu\in[\varepsilon_0,\varepsilon_0^{-1}]\}.$$

Let $D=\bar B_V(\bar \rho)$. Consider $\|f\|_{\infty}\le R_0$ and $\nu\in[\varepsilon_0,\varepsilon_0^{-1}]$.
Same arguments as in Theorem \ref{thm42} for the set $K$ replaced by $D$,  and same as \eqref{uc1}, we have 
\begin{equation}\label{Anflim}
 \A_{\nu,f}(t)=\lim_{s\to -\infty} \overline{S_{\nu,f}(t,s)D}.
\end{equation}

Given $u_0\in D$, let $u(t)=S_{\nu,f}(t,s)u_0$. By Theorem \ref{apriori}, there is $T_0>0$ depending on $R_0$ and $\varepsilon_0$ such that if $t-s\ge T_0$ then
\begin{equation}\label{R1bar}
 \|u(t)\|\le R_1= 2\varepsilon_0^{-2}\bar G^2,\qquad \|\nabla u(t)\|\le \bar \rho.
\end{equation}

We recall inequality (2.32) in \cite{Temam} p.~111:
\begin{equation}\label{dd}
\frac{\d}{\d t}\|\nabla u\|^2+\nu |Au|^2\le \frac2\nu \|f\|^2+\frac{2c_0}{\nu^3}\|u\|^2\|\nabla u\|^4, 
\end{equation}
where $c_0>0$ is an appropriate constant.

Consider $t_2>t_1>s+T_0$ with $t_2-t_1<1$. Integrating \eqref{dd} in time from $t_1$ to $t_2$ gives
\begin{align*}
  \nu \int_{t_1}^{t_2} |Au|^2\d\tau \le \|\nabla u(t_1)\|^2 + \frac{2c_0}{\nu^3} \int_{t_1}^{t_2} \|u\|^{2} \|\nabla u\|^4 \d\tau+\frac2\nu \int_{t_1}^{t_2} \|f\|^2\d\tau,
 \end{align*}
Using estimates in \eqref{R1bar} yields
 $$\nu \int_{t_1}^{t_2} |Au|^2\d\tau \le R_2=\bar\rho^2+2c_0 \varepsilon_0^{-3} R_1^2\bar \rho^4+2\varepsilon_0^{-1}R_0^2.$$

Now, 
 \begin{equation}
 u(t_2)-u(t_1)=-\nu \int_{t_1}^{t_2}Au(\tau)\d\tau -\int_{t_1}^{t_2} B(u(\tau),u(\tau))\d\tau+\int_{t_1}^{t_2}f(\tau)\d\tau. 
 \end{equation}
Then,
 \begin{align*}
 \|u(t_2)-u(t_1)\|
 &\le \nu\int_{t_1}^{t_2}\|Au(\tau)\|\d\tau + c_1 \int_{t_1}^{t_2} \|u(\tau)\|^{1/2} \|Au(\tau)\|^{1/2} \|\nabla u(\tau)\|\d\tau + R_0(t_2-t_1)\\
 &\le \nu^{1/2} (t_2-t_1)^{1/2}\Big(\nu \int_{t_1}^{t_2}\|Au(\tau)\|^2\d\tau\Big)^{1//2} \\
&\quad  + c_1 R_1^{1/2}\bar \rho (t_2-t_1)^{3/4}\nu^{-1/4}\Big(\nu\int_{t_1}^{t_2} \|Au(\tau)\|^{2}\d\tau\Big)^{1/4} + R_0(t_2-t_1).
 \end{align*}
 Above, $c_1>0$ is a constant independent of $R_0$, $\varepsilon_0$. Utilizing estimate \eqref{Anflim} gives
  \begin{align*}
 \|u(t_2)-u(t_1)\|
  &\le \varepsilon_0^{-1/2}R_2^{1/2}(t_2-t_1)^{1/2}+ c_1 \varepsilon_0^{-1/4}R_1^{1/2}\bar \rho R_2^{1/4}(t_2-t_1)^{3/4} + R_0(t_2-t_1).
 \end{align*}
 Therefore, there is $M>0$ depending on $R_0$ and $\varepsilon_0$ such that
 $$ \|u(t_2)-u(t_1)\|\le M(t_2-t_1)^{1/2}.$$
 Consequently,
$$  \Delta_H (S_{\nu,f}(t_2,s)D,S_{\nu,f}(t_1,s)D)\le M(t_2-t_1)^{1/2}.$$
By this and \eqref{Anflim},  we have 
 \begin{equation}
  \Delta_H (\A_{\nu,f}(t_2),\A_{\nu,f}(t_1))\le M(t_2-t_1)^{1/2}.
 \end{equation}
This finishes the proof of our proposition.
 \end{proof}

\section*{Appendix A}
This appendix reproduces the formal {\it a priori}\/ estimates
for the two-dimensional incompressible Navier--Stokes equations
stated as Theorem~\ref{apriori} in the main body of the paper.
These are essentially the estimates that appear
in \cite{Temam} pages 109--111 for the time-independent case
that have been adapted for time-dependent body forces
$f\in L^\infty(\R,H)$.

Let $u_0\in H$, $t\ge s$, and set $u(t)=S_f(t,s)u_0$.
Taking the inner product of (\ref{NSEfn}) with $u$ and
using the orthogonality property
\begin{equation}\label{ortho}
\<B(u,v),v\>=0
\end{equation}
we have
\begin{equation}\label{diq1}
\frac{\d}{\d t}\|u\|^2+\nu \|\nabla u\|^2 \le
\frac{\|f\|^2}{\nu\lambda_1}.
\end{equation}
Integrating \eqref{diq1} in time from $s$ to $t$ yields
\begin{equation}
\nu\int_s^t\|\nabla u(\tau)\|^2\,\d\tau \le \|u_0\|^2 +
(t-s)\frac{\|f\|^2_{L^\infty(\R,H)}}{\nu\lambda_1}
\end{equation}
Noting that $\|f\|_{L^\infty(\R,H)}=\nu^2\lambda_1 G$ obtains
\eqref{intV} in Theorem~\ref{apriori}.

By Poincar\'e's inequality we obtain
\begin{equation*}
\frac{\d}{\d t}\|u\|^2+\nu\lambda_1 \|u\|^2 \le
\frac{\|f\|^2}{\nu\lambda_1}.
\end{equation*}
Using Gronwall's inequality yields
\begin{equation}\label{fromG}
\|u(t)\|^2\le \|u_0\|^2 e^{-\nu\lambda_1 (t-s)}
	+\rho_0^2 (1-e^{-\nu\lambda_1 (t-s)})
	\quad\mbox{where}\quad
	\rho_0=\nu G.
\end{equation}
Therefore,
for each bounded subset $B$ of $H$, there exists a time $t_0(B)$
such that
\begin{equation}\label{absH}
\|u(t)\|^2\le 2\rho_0^2\qquad\mbox{for all}\qquad t\ge t_0(B).
\end{equation}
We have, consequently, obtained the first part of \eqref{Vbound}.

Finally, we recall here the needed estimates for $\|\nabla u(t)\|$. (Again, calculations can be found, for example, in \cite{Temam} pages 109--111.)
Let $\nu>0$. Recall from \eqref{fromG} that $\rho_0=\nu G$,
and define 
$$\rho'_{0}=\rho_{0}+1,\quad m_{1}=\lambda_1 \rho_{0}^2+\frac{{\rho'}^2_{0}}{\nu},\quad
m_{2}=2\nu \lambda_1^2 \rho_0^2,\quad
m_{3}=\frac{2c_0}{\nu^3}{\rho'}^2_{0} m_{1},$$
where $c_0$ is the same as in \eqref{dd}.
For  $R>\rho'_{0}$, denote
$$t_{1}(R)=1+\frac{1}{\nu\lambda_1}\log\frac{R^2}{2\rho_{0}+1}.$$
Then, for $\|u_0\|\le R$ and all $t\ge t_{1}(R)$,
\begin{equation}\label{uGt}
\|\nabla u(t+s)\|^2\le \rho(G)\qquad\hbox{where}\qquad
	\rho(G)=(m_{1} +m_{2})e^{m_{3}}.
\end{equation}
Noting that $\rho(G)$ is an increasing function of $G$ such that
also depends on $\nu$ and $\lambda_1$ yields the final part
of \eqref{Vbound} in Theorem~\ref{apriori}.

\section*{Acknowledgments}
LTH acknowledges the support by NSF grant DMS-1412796.
EJO was supported in part by NSF grant DMS grant DMS-1418928.
JCR was supported by an EPSRC Leadership Fellowship EP/G007470/1, as was the visit of EJO to Warwick while on sabbatical leave from UNR.

\bigskip

\noindent\textit{\today}.


\begin{thebibliography}{10}


\bibitem{BV} A.V. Babin and M.I. Vishik, {\it Attractors of evolution
equations}, Studies in Mathematics and its Applications, 25. North-Holland
Publishing Co., Amsterdam, 1992.

\bibitem{BPY} A.V. Babin and S. Yu Pilyugin, Continuous dependence of
attractors on the shape of domain, {\it J. Math. Sci.}, {\bf
87}  (1997) 3304--3310.


\bibitem{BLS} J.E. Billotti and J.P. LaSalle, Dissipative periodic
processes, {\it Bull. Amer. Math. Soc.},  {\bf 77}  (1971) 1082--1088.

\bibitem{CLR2009} A.N. Carvalho, J.A. Langa, and J.C. Robinson,
On the continuity of pullback attractors for evolution processes,
{\it Nonlinear Anal.}, {\bf 71}  no. 5--6 (2009), 1812--1824.


\bibitem{CLR} A.N. Carvalho, J.A. Langa, and J.C. Robinson, {\it
Attractors for infinite-dimensional non-autonomous dynamical systems},
Springer, New York, 2013.

\bibitem{CV} V.V. Chepyzhov and M.I. Vishik, Attractors for equations
of mathematical physics, {\it American Mathematical Society}, Providence,
RI, 2002.

\bibitem{Chu} I.D. Chueshov, {\it Introduction to the theory of
infinite-dimensional dissipative systems}, AKTA, Kharkiv, 1999.

\bibitem{CFbook}
P. Constantin and C. Foias, \emph{Navier--Stokes Equations}, Chicago:
University of Chicago Press, (1988).



\bibitem{DS} J.D. Daron  and D. A. Stainforth, {On quantifying the climate of the nonautonomous Lorenz-63
model}, Chaos, {\bf 25} (4), 043103.

\bibitem{DG} C.R. Doering and J.D. Gibbon, {\it  Applied analysis of
the Navier-Stokes equations}, Cambridge University Press, Cambridge, 1995

\bibitem{Hale} J.K. Hale, {\it  Asymptotic behavior of dissipative
systems}, American Mathematical Society, Providence, RI, 1988.

\bibitem{HLR} J.K. Hale, X.-B. Lin, and G. Raugel, Upper
semicontinuity of attractors for approximations of semigroups and partial
differential equations, {\it Math. Comp.},  {\bf 50}  (1988) 89--123.

\bibitem{HR} J.K. Hale and G. Raugel, Lower semicontinuity of attractors
of gradient systems and applications, {\it Ann. Mat. Pura Appl.}, {\bf 154}
(1989) 281--326.

\bibitem{HOR1} L.T. Hoang, E.J. Olson, and J.C. Robinson (2014) On the
continuity of global attractors, {\it Proc. Amer. Math. Soc.},
{\bf 143}:10 (2015) 4389--4395.


\bibitem{KP}
P. Kloeden and S. Piskarev,
Discrete convergence and the equivalence of equi-attraction and the continuous convergence of attractors,
{\it Int. J. Dyn. Syst. Differ. Equ.}, {\bf  1}  no. 1  (2007), 38--43.

\bibitem{Lady} O. A. Ladyzhenskaya, \emph{The mathematical theory of
viscous incompressible flow}, 2nd edition, Gordon and Breach (1969).

\bibitem{Ladyz} O.A. Ladyzhenskaya, {\it Attractors for semigroups and
evolution equations},  Cambridge University Press, Cambridge, 1991.

\bibitem{Lorenz} E.N. Lorenz, Deterministic nonperiodic
flow, \textit{J. Atmos. Sci.}, {\bf 20} (1963), 130--141.

\bibitem{KL} D. Li and P.E. Kloeden, Equi-attraction and the continuous
dependence of attractors on parameters, {\it Glasg. Math. J.}, {\bf 46}
(2004), 131--141.

\bibitem{KLb} D. Li and P.E. Kloeden,
Equi-attraction and the continuous dependence of pullback attractors on parameters,
{\it Stoch. Dyn.}, {\bf  4}   no. 3 (2004), 373--384.

\bibitem{Oxtoby} J.C. Oxtoby, {\it Measure and category}, second edition,  Springer-Verlag, New York-Berlin, 1980.


\bibitem{JCR} J.C. Robinson, {\it Infinite-dimensional dynamical systems}, Cambridge University Press, Cambridge, 2001.

\bibitem{SY02} G. R. Sell and Y. You,
\textit{Dynamics of Evolutionary Equations}.
Applied Mathematical Sciences \textbf{143},
Springer, New York (2002).

\bibitem{CS} C.T. Sparrow, \textit{The Lorenz equations: bifurcations,
chaos, and strange attractors}, Springer-Verlag, New York (1982).

\bibitem{SH} A.M. Stuart and A.R. Humphries, {\it Dynamical systems and
numerical analysis}, Cambridge University Press, Cambridge, 1996.

\bibitem{TemamNSE} R. Temam, \emph{Navier-Stokes equations: Theory
and numerical analysis,} 3rd edition,North-Holland, Amsterdam, (1984),
or AMS Chelsea Publishing, Providence, RI, (2001).

\bibitem{Temam} R. Temam, {\it Infinite-dimensional dynamical systems in
mechanics and physics}, Second edition, Springer-Verlag, New York, 1997.

\bibitem{Tucker} W. Tucker, The Lorenz attractor exists,
{\it C.R. Acad. Sci Paris}, {\bf 328} (1999), 1197--1202.

\end{thebibliography}
\end{document}